\newtheorem{definition}{Definition}[section]
\newtheorem{theorem}[definition]{Theorem}
\newtheorem{lemma}[definition]{Lemma}
\newtheorem{example}[definition]{Example}
\newtheorem{conjecture}[definition]{Conjecture}
\newtheorem{remark}[definition]{Remark}
\def\Z{\mathbb{Z}}
\def\Q{\mathbb{Q}}
\def\F{\mathbb{F}}
\def\l{\left}
\def\r{\right}
\def\Gal{\text{Gal}}
\title{Lower Bounds for $\text{GL}_2(\mathbb{F}_\ell)$ Number Fields}
\author{Vittoria Cristante}
\begin{document}
\maketitle

\begin{abstract}
Let $\mathcal{F}_n(X;G)$ denote the set of number fields of degree $n$ with absolute discriminant no larger than $X$ and Galois group $G$. This set is known to be finite for any finite permutation group $G$ and $X \geq 1$. In this paper, we give a lower bound for the cases $G=\text{GL}_2(\mathbb{F}_\ell), \; \text{PGL}_2(\mathbb{F}_\ell)$ for primes $\ell \geq 13$. We also provide a method to compute lower bounds for any permutation representations of these groups. 
\end{abstract}

\section{Introduction}

For a transitive subgroup $G$ of the symmetric group $S_n$ we can consider how many number fields have $G$ as their Galois group. Let $\widetilde{K}$ be the Galois closure of $K$ and define  \[\mathcal{F}_n(X;G): = \{K/\Q\;|\; [K:\Q]=n, \text{Gal}(\widetilde{K}/\Q) \cong G,\; \text{Disc}(K) \leq X\}.\]As a direct result of the Hermite--Minkowski theorem, this set is finite. A conjecture of Malle \cite{2004Malle} predicts the asymptotic size of this set, 
\[\#\mathcal{F}_n(X;G) \sim c_GX^{a(G)}(\log{X})^{b(G)-1}, \tag{1.1} \label{malleconstant}\]
where $c_G$ is a constant dependent on $G$ and the values $a(G)$ and $b(G)$ can be explicitly computed (see also Section \ref{subsec:grouptheory}). This conjecture has been proven in several cases (for example \cite{2005Bhargava, 2010Bhargava, 1971DavHeil, 1989Wright}), but remains out of reach in general. A weaker version of the conjecture \cite{2000Malle} predicts lower and upper bounds for the size of this set: for all $\varepsilon >0$,
\[X^{a(G)} \ll \#\mathcal{F}_n(X;G) \ll X^{a(G)+\varepsilon}.  \tag{1.2}\label{Malleweak}\]

The weaker form of the conjecture is similarly unresolved. For $S_n$ and the alternating group $A_n$, general lower bounds have been achieved \cite{2022Bhargava, 2021LandLOThorne}, but the exponents proven are about one-half and one-fourth of their conjectured values, respectively. Similarly, a lower bound was recently proven for $\text{GL}_2(\mathbb{F}_\ell)$ in its regular representation \cite{2023Ray}, however its exponent is about one-twelfth that of what Malle predicts; in this paper we give improved results for $\text{GL}_2(\mathbb{F}_\ell)$ in every transitive permutation representation. 

\subsection{Main Result} 

Let $\mathbb{F}_\ell$ be the finite field of $\ell$ elements. We can view $\text{GL}_2(\mathbb{F}_\ell)$ as a transitive subgroup of $S_{\ell^2-1}$ by considering its action on $\mathbb{F}_\ell^2\setminus\{0\}$. In this case, the predicted lower bound in \eqref{Malleweak} would be $X^{\frac{2}{\ell(\ell-1)}}$ (see Lemma \ref{indexcomp}). In this setting, we obtain the following lower bound. 

\begin{theorem}\label{bigboy}
Let $\ell \geq 13$ be a prime integer. Then, as $X \to \infty$,
\[\#\mathcal{F}_{\ell^2-1} (X;\rm{GL}_2(\mathbb{F}_\ell)) \gg X^{\frac{1}{2(\ell-1)^2}}.\]
\end{theorem}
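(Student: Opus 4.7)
The natural strategy is to realize $\text{GL}_2(\mathbb{F}_\ell)$-extensions as division fields of elliptic curves over $\mathbb{Q}$ and to manufacture many of them via quadratic twists. Fix a non-CM elliptic curve $E_0/\mathbb{Q}$ whose mod-$\ell$ Galois representation $\bar\rho_{E_0,\ell}:G_\mathbb{Q}\to\text{GL}_2(\mathbb{F}_\ell)$ is surjective; such an $E_0$ exists for every $\ell\geq 13$ by Mazur's isogeny theorem combined with Serre's open image theorem. Choosing a nonzero $v\in E_0[\ell]$ and letting $H=\text{Stab}(v)\subseteq\text{GL}_2(\mathbb{F}_\ell)$, the fixed field $K_0=\mathbb{Q}(E_0[\ell])^H$ is a number field of degree $\ell^2-1$ with Galois closure $\mathbb{Q}(E_0[\ell])$, so $K_0\in\mathcal{F}_{\ell^2-1}(X;\text{GL}_2(\mathbb{F}_\ell))$ whenever $\text{Disc}(K_0)\leq X$.

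For each squarefree integer $d$ coprime to $\ell N_{E_0}$, the quadratic twist $E_d:=E_0^{(d)}$ satisfies $\bar\rho_{E_d,\ell}\cong\chi_d\otimes\bar\rho_{E_0,\ell}$, where $\chi_d$ is the quadratic character cutting out $\mathbb{Q}(\sqrt{d})$. Independence of ramification (between $d$ and $\ell N_{E_0}$) shows that $\bar\rho_{E_d,\ell}$ remains surjective, so the analogous construction yields a number field $K_d$ of degree $\ell^2-1$ with Galois group $\text{GL}_2(\mathbb{F}_\ell)$. For distinct squarefree $d_1\neq d_2$ both coprime to $\ell N_{E_0}$, the Galois closures $L_{d_i}:=\mathbb{Q}(E_{d_i}[\ell])$ must be distinct: if $L_{d_1}=L_{d_2}$, then $\chi_{d_1 d_2}$ would factor through $\text{GL}_2(\mathbb{F}_\ell)^{\text{ab}}\cong\mathbb{F}_\ell^*$, forcing $d_1 d_2\in\{1,\ell^*\}\cdot(\mathbb{Q}^\times)^2$ where $\ell^*=(-1)^{(\ell-1)/2}\ell$, contradicting $\gcd(d_1 d_2,\ell)=1$. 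Hence distinct $d$'s produce distinct fields $K_d$.

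To bound $\text{Disc}(K_d)$, note that $N_{E_d}\ll_{E_0}d^2$ and that a prime $p\neq\ell$ ramifies in $K_d$ only if $p\mid N_{E_d}$. A case analysis of the inertia action on the coset space $\text{GL}_2(\mathbb{F}_\ell)/H\cong\mathbb{F}_\ell^2\setminus\{0\}$ — tame unipotent at multiplicative primes of $E_0$, the scalar $-I$ at primes $p\mid d$ introduced by the twist, and bounded-order at the (finitely many) additive primes of $E_0$ — gives the uniform local bound $v_p(\text{Disc}(K_d))\leq(\ell-1)^2$, while the wild contribution at $p=\ell$ is absorbed into a constant depending only on $E_0$ and $\ell$. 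Multiplying, $\text{Disc}(K_d)\ll d^{2(\ell-1)^2}$. Since the number of admissible squarefree $d\in[1,D]$ is $\gg D$, setting $X=D^{2(\ell-1)^2}$ yields $\#\mathcal{F}_{\ell^2-1}(X;\text{GL}_2(\mathbb{F}_\ell))\gg D = X^{1/(2(\ell-1)^2)}$.

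The principal obstacles will be (i) proving the uniform bound $v_p(\text{Disc}(K_d))\leq(\ell-1)^2$ across every type of ramified prime, including the wild ramification at $p=\ell$, via careful use of the conductor-discriminant formula applied to the permutation representation on $\mathbb{F}_\ell^2\setminus\{0\}$; and (ii) rigorously handling the distinctness argument to ensure $d\mapsto K_d$ has at worst uniformly bounded fibers.
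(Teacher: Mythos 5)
Your proposal is correct in outline and lands on the paper's exponent, but it takes a genuinely different route. The paper fixes $A\neq 0$ and varies $B$ subject to $\Delta_f=4A^3+27B^2$ being squarefree; this forces every bad prime (away from $2,3,\ell$) to be multiplicative with $\nu_p(j(E))=-1$, so inertia is generated by a transvection whose index on $\mathbb{F}_\ell^2\setminus\{0\}$ is $(\ell-1)^2$, giving $|D_{\ell^2-1}(A,B)|\ll \Delta_f^{(\ell-1)^2}\ll B^{2(\ell-1)^2}$; surjectivity for all but finitely many $B$ comes from rational points on $X_0(\ell)$, $X_s^+(\ell)$, etc., and distinctness from comparing the squarefree values $\Delta_f$. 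You instead twist a single surjective curve $E_0$, which replaces the modular-curve input by the easier fact that quadratic twisting preserves surjectivity, and replaces the squarefreeness argument by a cleaner distinctness argument through $\mathrm{GL}_2(\mathbb{F}_\ell)^{\mathrm{ab}}\cong\mathbb{F}_\ell^\times$. Two remarks. First, your citation for the existence of $E_0$ is off: Serre's open image theorem and Mazur's theorem do not, for a \emph{given} $\ell$, produce a curve with surjective mod-$\ell$ image; invoke instead a Serre curve (e.g.\ the curve of conductor $37$) or Duke's theorem. Second, your discriminant bookkeeping is lossier than necessary, though still valid as an upper bound: at a prime $p\mid d$ the inertia image is $\{\pm I\}$, whose index on $\mathbb{F}_\ell^2\setminus\{0\}$ is $(\ell^2-1)/2$, not $(\ell-1)^2$, and routing the estimate through $N_{E_d}\ll d^2$ costs roughly another factor of $2$ in the exponent. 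Done exactly, your construction gives $\mathrm{Disc}(K_d)\ll_{E_0,\ell}d^{(\ell^2-1)/2}$ and hence the stronger bound $\#\mathcal{F}_{\ell^2-1}(X;\mathrm{GL}_2(\mathbb{F}_\ell))\gg X^{2/(\ell^2-1)}$, which is within a factor $\ell/(\ell+1)$ of Malle's predicted exponent $2/(\ell(\ell-1))$ and strictly better than the theorem you were asked to prove; since over-estimating discriminants only weakens the conclusion, the statement as claimed still follows from your argument.
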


The exponent in this result is about one-fourth of the one appearing in \eqref{Malleweak}, which is commensurable with the current best lower bounds for $S_n$ and $A_n$. 

From the Galois correspondence, we know that a $\text{GL}_2(\mathbb{F}_\ell)$ number field will contain a subfield whose Galois group is the projective general linear group $\text{PGL}_2(\mathbb{F}_\ell)$. By virtue of its action on $\mathbb{P}^1(\mathbb{F}_\ell)$, $\text{PGL}_2(\mathbb{F}_\ell)$ has a primitive, degree $\ell+1$ permutation representation. Moreover, a GL$_2(\mathbb{F}_\ell)$-extension of degree $\ell^2-1$ will contain a PGL$_2(\mathbb{F}_\ell)$-subextension of degree $\ell+1$. Using similar ideas from our proof of the GL$_2(\mathbb{F}_\ell)$ lower bound, we also get a PGL$_2(\mathbb{F}_\ell)$ lower bound. 

\begin{theorem}\label{smallboy}
For $\ell \geq 13$, as $X \to \infty$,
\[\#\mathcal{F}_{\ell+1} (X;\rm{PGL}_2(\mathbb{F}_\ell)) \gg X^{\frac{1}{2(\ell-1)}}.\]
\end{theorem}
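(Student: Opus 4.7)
The plan is to parallel the proof of Theorem \ref{bigboy}, using the $\text{PGL}_2(\mathbb{F}_\ell)$-action on $\mathbb{P}^1(\mathbb{F}_\ell)$ in place of the $\text{GL}_2(\mathbb{F}_\ell)$-action on $\mathbb{F}_\ell^2 \setminus \{0\}$. The two settings are intertwined: if $K/\Q$ is a GL$_2$-field of degree $\ell^2-1$ with Galois closure $\widetilde{K}$, the upper-triangular Borel $B \leq \text{GL}_2(\mathbb{F}_\ell)$ has index $\ell+1$ and contains both the point stabilizer $H$ of the GL$_2$-action and the center $Z$, so $L := \widetilde{K}^B$ is a degree-$(\ell+1)$ subfield of $K$ whose Galois closure $\widetilde{K}^Z$ has Galois group $\text{PGL}_2(\mathbb{F}_\ell)$ acting on $\mathbb{P}^1(\mathbb{F}_\ell) = \text{PGL}_2(\mathbb{F}_\ell)/(B/Z)$, i.e.\ exactly the permutation representation of the theorem. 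The tower formula $\text{Disc}(L)^{\ell-1} \mid \text{Disc}(K)$ (valid since $[K:L] = \ell-1$) converts $\text{Disc}(K) \leq X$ into $\text{Disc}(L) \leq Y := X^{1/(\ell-1)}$, and the arithmetic already matches the target: $X^{1/(2(\ell-1)^2)} = Y^{1/(2(\ell-1))}$.

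The cleanest route to the theorem is to rerun the construction behind Theorem \ref{bigboy} directly at the projective level, recording the degree-$(\ell+1)$ field $L$ in place of (or alongside) the degree-$(\ell^2-1)$ field $K$ and redoing the ramification analysis for the permutation character $\text{Ind}_B^{\text{GL}_2(\mathbb{F}_\ell)} \mathbf{1}$ in place of $\text{Ind}_H^{\text{GL}_2(\mathbb{F}_\ell)} \mathbf{1}$. At each ramified prime the contribution to $\text{Disc}(L)$ is governed by how the inertia intersects $B$ rather than $H$, which saves a factor of $\ell-1$ in the exponent, exactly the ratio of degrees $(\ell+1):(\ell^2-1)$. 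The multiplicity step, which bounds how many generating inputs yield the same $L$, then follows the same template as in Theorem \ref{bigboy}.

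An alternative is to derive Theorem \ref{smallboy} from Theorem \ref{bigboy} directly by bounding the fiber of $K \mapsto L$. Given $\widetilde{L}$, GL$_2$-extensions $\widetilde{K}$ lying above it solve the embedding problem attached to the central extension $1 \to \mathbb{F}_\ell^\times \to \text{GL}_2(\mathbb{F}_\ell) \to \text{PGL}_2(\mathbb{F}_\ell) \to 1$ and, when non-empty, form a torsor under $\text{Hom}(G_\Q, \mathbb{F}_\ell^\times)$, i.e.\ under twists by Dirichlet characters of order dividing $\ell-1$; the conductor-discriminant formula combined with a standard count of characters of bounded conductor yields a fiber bound of $O_\varepsilon(Y^\varepsilon)$. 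The main obstacle on either route will be executing the bookkeeping cleanly enough --- at ramified primes in the first approach, or at primes of wild twisting in the second --- to preserve the precise exponent $\frac{1}{2(\ell-1)}$ rather than only $\frac{1}{2(\ell-1)}-\varepsilon$.
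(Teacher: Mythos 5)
Your first route is exactly the paper's argument: it works at the projective level with the degree-$(\ell+1)$ subfield fixed by a Borel, notes that the inertia generator $M_\mathcal{E}$ has just $2$ orbits on $\mathbb{P}^1(\mathbb{F}_\ell)$ (a fixed point plus an $\ell$-cycle), so each tame prime contributes $p^{\ell-1}$ and $|D_{\ell+1}(A,B)| \ll (\Delta_f)^{\ell-1}$, and then reruns the count with $B \ll X^{1/(2(\ell-1))}$; surjectivity and distinctness carry over verbatim, and the exact exponent comes out with no $\varepsilon$ loss. The second route (bounding fibers of $K \mapsto L$ via twisting) is not needed and is not what the paper does.
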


As indicated above, the exponent in this lower bound is about one-fourth of the anticipated exponent appearing in \eqref{Malleweak}. (In the case of the degree $\ell+1$ permutation representation of $\text{PGL}_2(\mathbb{F}_\ell)$, the exponent is predicted to be $2/(\ell-1)$.) By contrast, the best known upper bound for $\text{PGL}_2(\mathbb{F}_\ell)$, given in Theorem 5.5 of \cite{2023LO}, has an exponent that is larger by a factor of more than $2\ell$ than that of the one in \eqref{Malleweak}. 

We will see that the method in which we go about proving these theorems can be applied to other permutation representations of both $\text{GL}_2(\mathbb{F}_\ell)$ and $\text{PGL}_2(\mathbb{F}_\ell)$. We outline this procedure and give an example as to how this work can be generalized in Section \ref{sec:generalizations}. 

\subsection{Method of Proof} \label{sec:method}
Let $E$ be a rational elliptic curve in short Weierstrass form $E:y^2=x^3+Ax+B$. For $\ell$ prime, let $E[\ell]$ denote the set of $\ell$-torsion points of $E$. The $\ell$-torsion field $\mathbb{Q}(E[\ell])$, generated by the coordinates of the $\ell$-torsion points, is a Galois extension of $\mathbb{Q}$ whose Galois group is a subgroup of $\text{GL}_2(\mathbb{F}_\ell)$. More explicitly, if we let 
\[\rho_{E,\ell} \colon \text{Gal}(\mathbb{Q}(E[\ell])/\mathbb{Q}) \to\text{GL}_2(\mathbb{F}_\ell),\tag{1.3}\label{galrep}\]
be the natural mod-$\ell$ representation of the $\ell$-torsion field, the fields we are interested in will be precisely those with $\text{im}(\rho_{E,\ell})=\text{GL}_2(\mathbb{F}_\ell)$. 
In this paper, we consider the subfields $\mathbb{Q}(P) \subseteq \mathbb{Q}(E[\ell])$ obtained from adjoining just one $\ell$-torsion point, $P=(x,y)$, to $\mathbb{Q}$. When $\rho_{E,\ell}$ is surjective, this subfield will have $\text{GL}_2(\mathbb{F}_\ell)$ as its Galois group, viewed in its degree $\ell^2-1$ permutation representation. 

We use the discriminants of elliptic curves to describe the discriminants of their $\ell$-torsion fields, as the two are closely related. Given an elliptic curve in short Weierstrass form such that $4A^3+27B^2$ is square-free and $\rho_{E,\ell}$ is surjective, we are able to control the inertia subgroup associated to $\mathbb{Q}(E[\ell])$ and hence control the discriminants of its subfields. We then use modular curves and what's known about rational $\ell-$isogenies of elliptic curves to show $\rho_{E,\ell}$ will be surjective for all but finitely many $E/\mathbb{Q}$ of our choosing for $\ell \geq 13$. From there, we use a counting argument to prove Theorems \ref{bigboy} and \ref{smallboy}.

\subsection*{Acknowledgements} This paper grew out of a Master's thesis completed in Spring 2023, under the  advisement of Robert Lemke Oliver. During this time, I was informed that Anwesh Ray had been independently working on an adjacent problem, and some related results were published by him in \cite{2023Ray}. I would like to thank Daniel Keliher, John Voight, Tristan Phillips, and Álvaro Lozano-Robledo for their helpful comments and guidance.

\section{Galois groups as permutation groups}

In this section, we review permutation representations of finite groups and describe what the Galois group of a non-Galois extension is. We explicitly define the constant $a(G)$ from Malle's conjecture (\ref{malleconstant}) and compute it for $\text{GL}_2(\mathbb{F}_\ell)$ and $\text{PGL}_2(\mathbb{F}_\ell)$. 

\subsection{Permutation Representations}\label{sec:perm}

Let $G$ be a finite group of order $n$. For each $g \in G$, the action of left multiplication by $g$ produces a permutation on the $n$ elements of the group. In this way, there is a natural isomorphism between $G$ and a subgroup of $S_n$. We consider this to be the \textit{regular} permutation representation of $G$. 

We can also consider a faithful, transitive group action of $G$ on a finite set $X$ of $d$ elements. Each $g \in G$ induces a permutation on the elements of $X$ so the action is isomorphic to a subgroup of $S_{d}$. Formally, a \textit{permutation representation} of a group $G$ acting on a set $X$ is a homomorphism $\pi \colon G \to S_{d}$. In this paper we will call the image of the respective representation a \textit{degree d permutation representation} of $G$.

For primes $\ell \geq 2$, one can consider the natural action of $\text{GL}_2(\mathbb{F}_\ell)$ on $\mathbb{F}_\ell^2\setminus\{0\}$, which gives rise to a degree $\ell^2-1$ permutation representation of $\text{GL}_2(\mathbb{F}_\ell)$. Similarly, for $\text{PGL}_2(\mathbb{F}_\ell)$, there is a natural action on the $\ell+1$ elements of $\mathbb{P}^1(\mathbb{F}_\ell)$, which gives rise to a degree $\ell+1$ permutation representation of the group. 

\begin{remark}
The majority of our discussion will be dedicated to these specific representations. In Section \ref{sec:generalizations}, we generalize to other representations of these groups.
\end{remark}
 
We can use the notion of permutation representations of a group to define Galois groups for non-Galois extensions. Let $K$ be a number field of degree $n$ and let $\widetilde{K}$ be its Galois closure over $\mathbb{Q}$. There is a natural faithful and transitive group action of $\Gal(\widetilde{K}/\Q)$ on the $n$ embeddings $K \hookrightarrow \overline{\mathbb{Q}}$. The image of the degree $n$ representation of $\Gal(\widetilde{K}/\Q)$ in $S_n$ is what we define to be the Galois group of $K/\mathbb{Q}$. 

\subsection{Malle's index} \label{subsec:grouptheory} Let $G$ be a transitive subgroup of $S_n$. Here we explicitly define the exponent $a(G)$ in Malle's conjecture. For an element $g \in G$, we define its \textit{index} by
\[\text{ind}(g)=n-\#\text{orbits $g$ induces on }\{1,\ldots,n\}. \tag{2.1}\label{ind}\]
We use this to then define the \textit{Malle index} of a group: \[\text{ind}(G) = \text{min}\l\{\text{ind}(g) \; | \; g \in G, \; g\ne\text{Id}\r\}.\] 

\begin{definition}
The constant in Malle's conjecture, $a(G)$, is defined as
\[a(G)=\rm{ind}(G)^{-1}. \tag{2.2}\label{malexp}\]
\end{definition} 

It will be useful to compute $a(G)$ for $\text{GL}_2(\mathbb{F}_\ell)$ in its $\ell^2-1$ permutation representation, as well as $\text{PGL}_2(\mathbb{F}_\ell)$ in its $\ell+1$ permutation representation. Note that to assume $g \in G$ has minimal index amongst all elements of $G$ is to assume that $g$ induces the maximal number of orbits on $n$ elements.

\begin{lemma}\label{indexcomp}
Let $\ell \geq 3$. Viewed in its degree $\ell^2-1$ permutation representation, the exponent in Malle's conjecture for $\rm{GL}_2(\mathbb{F}_\ell)$ is 
\[a(\rm{GL}_2(\mathbb{F}_\ell))=\frac{2}{\ell(\ell-1)}.\]
Similarly, by viewing $\rm{PGL}_2(\mathbb{F}_\ell)$ in its $\ell+1$ representation, the exponent in Malle's conjecture is \[a(\rm{PGL}_2(\mathbb{F}_\ell)) = \frac{2}{\ell-1}.\]
\end{lemma}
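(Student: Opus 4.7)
The plan is to apply the definition of Malle's index directly: in each case I identify a nonidentity element with the largest possible orbit count on the relevant permutation set, then confirm optimality via a uniform upper bound valid for every nonidentity $g$.

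For $\text{GL}_2(\mathbb{F}_\ell)$ acting on $V := \mathbb{F}_\ell^2 \setminus \{0\}$, I would begin with the observation that for any nonidentity $g$ the fixed set is $\ker(g - I) \setminus \{0\}$, and $\ker(g - I)$ is a proper subspace of $\mathbb{F}_\ell^2$ (hence of dimension at most one), contributing at most $\ell - 1$ fixed vectors. The remaining nonfixed vectors lie in orbits of size at least $2$, accounting for at most $\ell(\ell-1)/2$ further orbits. Combining yields at most $(\ell-1) + \ell(\ell-1)/2 = (\ell-1)(\ell+2)/2$ orbits, i.e.\ $\text{ind}(g) \geq \ell(\ell-1)/2$. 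I then exhibit $g = \text{diag}(1, -1)$ (well-defined since $\ell \geq 3$) as saturating both inequalities simultaneously: the first coordinate axis gives the full $\ell - 1$ fixed vectors, and because $g^2 = \text{Id}$, every other orbit has size exactly $2$. This forces $a(\text{GL}_2(\mathbb{F}_\ell)) = 2/(\ell(\ell-1))$.

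For $\text{PGL}_2(\mathbb{F}_\ell)$ acting on $\mathbb{P}^1(\mathbb{F}_\ell)$, the same bookkeeping applies with the fixed-point bound $\ell - 1$ replaced by $2$, since a nonscalar $2 \times 2$ matrix has at most two eigenlines. This gives at most $2 + (\ell - 1)/2 = (\ell + 3)/2$ orbits and hence $\text{ind}(g) \geq (\ell - 1)/2$. The image of $\text{diag}(1, -1)$ in $\text{PGL}_2$ again attains equality, with fixed points $[1:0]$ and $[0:1]$ and the involution $[1:c] \mapsto [1:-c]$ pairing the remaining $\ell - 1$ points of $\mathbb{F}_\ell^*$, giving $a(\text{PGL}_2(\mathbb{F}_\ell)) = 2/(\ell-1)$.

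There is no real obstacle here: the argument reduces to two elementary observations (the codimension-one bound on fixed spaces of nonidentity matrices, and that orbits outside the fixed set have size at least $2$), together with the display of a single extremal element in each group. The only thing I would double-check is that the upper bounds are achieved inside $\text{GL}_2(\mathbb{F}_\ell)$ and $\text{PGL}_2(\mathbb{F}_\ell)$ themselves, not merely inside some larger ambient symmetric group, but this is immediate from $\text{diag}(1,-1)$.
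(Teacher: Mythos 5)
Your proof is correct and takes essentially the same route as the paper: bound the orbit count of a nonidentity element (using that its fixed vectors form a proper subspace minus the origin, resp.\ at most two points of $\mathbb{P}^1(\mathbb{F}_\ell)$) and exhibit $\mathrm{diag}(1,-1)$ as the extremal element in both groups. The only cosmetic point is that your intermediate claim of ``at most $\ell(\ell-1)/2$ further orbits'' presupposes the full $\ell-1$ fixed vectors -- with fewer fixed vectors there are more nonfixed ones -- but since the combined bound $f+(\ell^2-1-f)/2$ is increasing in the number $f$ of fixed vectors, the stated maximum and hence the conclusion are unaffected.
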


\begin{proof} If an element of $\text{GL}_2(\mathbb{F}_\ell)$ has no fixed points, then we know that every point must be contained within some orbit of size at least 2. In this case, the most we can have is $\frac{\ell^2-1}{2}$ orbits, which occurs when we have a product of disjoint transpositions. 

If a nonidentity element fixes one point, it must fix the $\ell-2$ other points on the same line, and it cannot fix any more points otherwise it would be the identity. Thus we need only consider the maximum number of orbits a permutation can produce when $\ell-1$ points are fixed. The maximum number of orbits would occur when the remaining $\ell^2-\ell$ points are permuted by $\frac{\ell^2-\ell}{2}$ transpositions, making our element of $\text{GL}_2(\mathbb{F}_\ell)$ a product of the transpositions with $\ell-1$ single-element orbits. In this case, the maximum number of orbits that can be produced is $ \frac{\ell^2+\ell-2}{2}$ orbits. Since $\frac{\ell^2+\ell-2}{2}>\frac{\ell^2-1}{2}$, an element that fixes $\ell-1$ points and pairs the remaining points would give us the minimum index across $\text{GL}_2(\mathbb{F}_\ell)\setminus\{\text{Id}\}$. Such an element exists; an example is

\[M=\begin{pmatrix} 1 & 0 \\ 0 & -1 \end{pmatrix}.\]

For $\ell \geq 3$, $\text{PGL}_2(\mathbb{F}_\ell)$ acts on an even number of elements. So we know that if a nonidentity element of $\text{PGL}_2(\mathbb{F}_\ell)$ fixes no points, then the maximum number of orbits it can produce is $\frac{\ell+1}{2}$.  Any element that fixes more than 2 points of $\mathbb{P}^1(\mathbb{F}_\ell)$ must actually fix every point and be the identity. 

 The maximal number of orbits a permutation with at least one fixed point can have is $2+\frac{\ell-1}{2}$ orbits. In fixing one point, you are left with $\ell$ others. Thus, you can pair up $\ell-1$ of the remaining points and fix one more to obtain $2+\frac{\ell-1}{2}$ orbits. An element with this action would have the minimum index in $\text{PGL}_2(\mathbb{F}_\ell)$; the projective image of $M$ from $(a)$ will do. 
\end{proof}

\begin{remark} When $\ell=2$, $\rm{GL}_2(\mathbb{F}_2)\cong\rm{PGL}_2(\mathbb{F}_2)\cong S_3$ and thus $a(G)=1$ as the group contains a transposition. 
\end{remark}

From these computations, we can give Malle's weak conjecture for these two groups in these specific permutation representations.

\begin{conjecture} Let $\ell \geq 3$ be prime. For all $\varepsilon>0$, 
\begin{align*}X^\frac{2}{\ell(\ell-1)} \ll & \; \#\mathcal{F}_{\ell^2-1}(X; \rm{GL}_2(\mathbb{F}_\ell)) \ll X^{\frac{2}{\ell(\ell-1)}+\varepsilon};\\
X^\frac{2}{(\ell-1)} \ll & \; \#\mathcal{F}_{\ell+1}(X;\rm{PGL}_2(\mathbb{F}_\ell)) \ll X^{\frac{2}{(\ell-1)}+\varepsilon}.
\end{align*}
\end{conjecture}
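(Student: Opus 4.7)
The plan is to treat this displayed statement as a direct specialization of Malle's weak conjecture \eqref{Malleweak} to the two groups at hand; no new input beyond Lemma \ref{indexcomp} is required. Conjecture \eqref{Malleweak} is phrased uniformly in the abstract exponent $a(G)$, so once the two values
\[a(\text{GL}_2(\mathbb{F}_\ell)) = \frac{2}{\ell(\ell-1)}, \qquad a(\text{PGL}_2(\mathbb{F}_\ell)) = \frac{2}{\ell-1}\]
are in hand, the two displayed bounds drop out by substitution. Both of these values have just been extracted in Lemma \ref{indexcomp}, so the conjecture is literally obtained by pasting them into the general framework.

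In effect the only genuine content of the passage is the orbit count carried out in the preceding lemma: for $\text{GL}_2(\mathbb{F}_\ell)$ acting on $\mathbb{F}_\ell^2 \setminus \{0\}$, the nonidentity element producing the most orbits (hence least index) is an involution fixing the $\ell-1$ nonzero points on a single line and pairing the remaining $\ell^2-\ell$ points into transpositions; for $\text{PGL}_2(\mathbb{F}_\ell)$ acting on $\mathbb{P}^1(\mathbb{F}_\ell)$, the analogous extremizer is an involution with two fixed points and $(\ell-1)/2$ transpositions. In both cases the linear, respectively projective, class of $\mathrm{diag}(1,-1)$ realizes the extremum, and taking reciprocals gives the exponents above.

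The main obstacle, of course, is that Malle's weak conjecture \eqref{Malleweak} is itself unresolved, so the displayed inequalities genuinely remain conjectural and there is nothing to prove unconditionally. The purpose of recording the statement here is to fix the target that the rest of the paper aims at: Theorems \ref{bigboy} and \ref{smallboy} will establish unconditional lower bounds with exponents $\frac{1}{2(\ell-1)^2}$ and $\frac{1}{2(\ell-1)}$, which are of the correct shape but smaller than the conjectured $\frac{2}{\ell(\ell-1)}$ and $\frac{2}{\ell-1}$ by a factor of roughly four. Those bounds are proved by an entirely different route (counting elliptic curves with surjective mod-$\ell$ representation and controlling ramification in subfields of $\mathbb{Q}(E[\ell])$), so the orbit-counting here is independent of the techniques used later and serves only to make the \emph{conjectural} targets explicit.
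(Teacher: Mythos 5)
Your reading matches the paper exactly: the displayed statement is not proved but is simply Malle's weak conjecture \eqref{Malleweak} specialized to these two permutation representations, with the exponents $a(\mathrm{GL}_2(\mathbb{F}_\ell))=\tfrac{2}{\ell(\ell-1)}$ and $a(\mathrm{PGL}_2(\mathbb{F}_\ell))=\tfrac{2}{\ell-1}$ supplied by the orbit count in Lemma \ref{indexcomp}. No further justification is given or needed, so your proposal is correct and follows the same route as the paper.
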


\section{Elliptic curves and $\ell$-torsion fields}

Let $\ell \geq 2$ and let $E$ be a rational elliptic curve given by the short Weierstrass equation $y^2=f(x)=x^3+Ax+B$. Recall the discriminant of a curve in this form is given by \[\Delta_E = -16(4A^3+27B^2).\] Let $\Delta_f=4A^3+27B^2$, corresponding to the discriminant of the right-hand cubic $f(x)$. For an elliptic curve written in this form, we can also define the \textit{height} to be $H_E=\text{max}\{4|A|^3, 27B^2\}$. 

It is known that for all primes $\ell$, the $\ell$-torsion subgroup is $E[\ell]\cong (\mathbb{Z}/\ell\mathbb{Z})^2$. Adjoining the coordinates of these points to $\Q$, we obtain the $\ell$-torsion field $\Q(E[\ell])$. This extension is Galois with Galois group a subgroup of $\text{GL}_2(\mathbb{F}_\ell)$ and hence its degree is at most $\#\text{GL}_2(\mathbb{F}_\ell)=(\ell^2-1)(\ell^2-\ell)$. When the Galois group is precisely $\text{GL}_2(\mathbb{F}_\ell)$, we can adjoin a single non-trivial $\ell$-torsion point, $P$, to obtain a subfield $K_{\ell^2-1}\vcentcolon = \mathbb{Q}(P)$ of degree $\ell^2-1$. By considering the action of the Galois group of $\mathbb{Q}(E[\ell])$ on the embeddings of this field we find $\text{Gal}(K_{\ell^2-1}/\mathbb{Q})$ is $\text{GL}_2(\mathbb{F}_\ell)$ in its degree $\ell^2-1$ permutation representation as described in Section \ref{sec:perm}. An alternative way of obtaining this type of subfield is by looking at the subgroup of $\text{GL}_2(\mathbb{F}_\ell)$ that fixes one $\ell$-torsion point.

Further, when the Galois group of $\mathbb{Q}(E[\ell])$ is $\text{GL}_2(\mathbb{F}_\ell)$, the subfield fixed by scalar matrices (the center of the group) is a degree $\ell(\ell^2-1)$ extension with Galois group $\text{PGL}_2(\mathbb{F}_\ell)$ by Galois correspondence. We can then find a subfield $K_{\ell+1}$ of degree $\ell+1$, contained within $K_{\ell^2-1}$, whose Galois group is permutation isomorphic to the degree $\ell+1$ permutation representation of $\text{PGL}_2(\mathbb{F}_\ell)$. 

\subsection{Representations of $\text{Gal}(\Q(E[\ell])/\Q)$}\label{modell}
Let $\rho_{E,\ell}$ denote the mod-$\ell$ Galois representation from \ref{galrep}. As suggested above, we are particularly interested in when $\rho_{E,\ell}$ is surjective.

It is known that if $\rho_{E,\ell}$ is \textit{not} surjective, then its image is contained in one of three types of maximal subgroups of $\text{GL}_2(\mathbb{F}_\ell)$: a Borel subgroup (those composed of upper triangular matrices), the normalizer of a split or non-split Cartan subgroup (diagonal matrices isomorphic to $(\mathbb{F}_\ell^\times)^2$ or matrices isomorphic to $\mathbb{F}^\times_{\ell^2}$, respctively), or an exceptional subgroup (those whose image in $\text{PGL}_2(\mathbb{F}_\ell)$ is $A_4, A_5$, or $S_4$). 

In \cite{1971Serre}, it is shown that for $\ell \geq 17$ the image of the mod-$\ell$ representation cannot be contained within an exceptional subgroup of $\text{GL}_2(\mathbb{F}_\ell)$. Additionally, it's known that, for elliptic curves defined over $\mathbb{Q}$, it is not possible for the image of $\rho_{E,\ell}$ to be $A_4$ or $A_5$ \cite{lmfdb}. To determine how often the mod-$\ell$ representation is contained within a Borel subgroup, the normalizer of a Cartan subgroup or a subgroup whose projective image is $S_4$, we use modular curves. We are concerned with how many rational points they have, as the rational points will correspond to $\overline{\Q}$-isomorphism classes of elliptic curves whose mod-$\ell$ representation is contained within a specific group. We will use the following notation: 
\begin{align*}
X_0(\ell) & := \text{ modular curve for a Borel subgroup}\\
X_s^+(\ell) & := \text{ modular curve for the normalizer of a split Cartan subgroup}\\
X_{ns}^+(\ell) & := \text{ modular curve for the normalizer of a non-split Cartan subgroup}\\
X_{S_4}(\ell) & := \text{ modular curve for subgroups whose projective image are $S_4$}
\end{align*}

The following lemma relies on what we know about rational points on these curves and their genus and will contribute to our ability to make a statement about how often the curves we're counting will have a non-surjective Galois representation. 

\begin{lemma}\label{isomorphismclasses}
Let $\ell \geq 17$. For all but finitely many $\overline{\Q}$-isomorphism classes of elliptic curves, the representation $\rho_{E,\ell}$ is surjective. 
\end{lemma}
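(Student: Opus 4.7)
The plan is to exploit the classification of maximal subgroups of $\text{GL}_2(\mathbb{F}_\ell)$ recalled just above the lemma. If $\rho_{E,\ell}$ is not surjective, then its image is contained in a Borel subgroup, in the normalizer of a split or non-split Cartan subgroup, or in an exceptional subgroup. Since $\ell \geq 17$, the cited result of Serre in \cite{1971Serre} eliminates the exceptional case entirely, so it suffices to bound the number of $\overline{\Q}$-isomorphism classes of $E/\Q$ in each of the three remaining cases.

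I would then translate each remaining case into a question about rational points on modular curves: the $\overline{\Q}$-isomorphism classes of $E/\Q$ with $\text{im}(\rho_{E,\ell})$ contained in a maximal subgroup $H$ correspond, via the $j$-invariant map, to non-cuspidal rational points on $X_H(\ell)$. Thus the lemma reduces to showing that each of $X_0(\ell)$, $X_s^+(\ell)$, and $X_{ns}^+(\ell)$ has only finitely many non-cuspidal rational points for $\ell \geq 17$. The main engine is Faltings' theorem: the genus of each of these three modular curves grows linearly in $\ell$, so past a small cutoff each has genus at least $2$, and Faltings gives finiteness of $X_H(\ell)(\Q)$ immediately.

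The main obstacle is handling the genuinely low-genus exceptions. The most delicate case is $X_0(\ell)$ for $\ell = 17, 19$, both of which have genus $1$ and therefore cannot be dispatched by Faltings alone. For these I would appeal to Mazur's classification of rational $\ell$-isogenies, which implies that $X_0(\ell)(\Q)$ has only finitely many non-cuspidal points for every prime $\ell$ (and in fact none when $\ell \notin \{2,3,5,7,11,13,17,19,37,43,67,163\}$). An analogous finiteness statement for $X_s^+(\ell)$, due to Bilu--Parent--Rebolledo together with later refinements, covers any residual low-genus behaviour on the split-Cartan side, and the genus of $X_{ns}^+(\ell)$ already exceeds $1$ at $\ell=17$, so Faltings applies directly there. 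Combining the three finiteness statements bounds the total number of $\overline{\Q}$-isomorphism classes of $E/\Q$ with non-surjective $\rho_{E,\ell}$ by a finite quantity, proving the lemma.
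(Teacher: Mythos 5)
Your proposal is correct and follows essentially the same route as the paper: eliminate the exceptional subgroups via Serre for $\ell \geq 17$, then reduce the Borel and Cartan-normalizer cases to finiteness of rational points on $X_0(\ell)$, $X_s^+(\ell)$, and $X_{ns}^+(\ell)$, using Mazur for the Borel case and Faltings together with Bilu--Parent--Rebolledo for the Cartan cases. Your explicit attention to the genus-one curves $X_0(17)$ and $X_0(19)$, where Faltings alone does not suffice and Mazur's theorem is genuinely needed, is a welcome refinement of the same argument rather than a different approach.
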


\begin{proof}
The curves $X_0(\ell)$ for $\ell = 2,3,5,6,13$ have genus zero and have infinitely many rational points. For $\ell=11$ and $\ell \geq 17$, a theorem of Mazur \cite[Theorem 6]{1977Mazur} gives us that $X_0(\ell)$ has finitely many rational points. Thus for $\ell=11$ and $\ell \geq 17$ there are only finitely many isomorphism class of elliptic curves whose mod-$\ell$ representation is contained within a Borel subgroup. 

For $\ell\geq 17$, both $X_s^+(\ell)$ and $X_{ns}^+(\ell)$ have genus 2, so by Faltings' theorem they must have finitely many rational points. More precisely, Bilu, Parent, and Rebolledo show in \cite{2013Bilu} that for $\ell =11$ and $\ell \geq 17$, it is not at all possible for the image of $\rho_{E,\ell}$ to lie within the normalizer of a split Cartan subgroup. Thus for $\ell \geq 11$, there are only finitely many isomorphism classes of elliptic curves whose Galois representation of $\Q(E[\ell])$ is contained within the normalizer of a split Cartan subgroup.

Since we also know that $\text{im}(\rho_{E,\ell})$ cannot be contained within an exceptional subgroup of $\text{GL}_2(\mathbb{F}_\ell)$, we can say that for $\ell \geq 17$, there are only finitely many $\overline{\mathbb{Q}}$-isomorphism classes of elliptic curves with nonsurjective representation. 
\end{proof}

For other primes, we are able to place an upper bound on the number of elliptic curves with non-surjective Galois representation. 

\begin{lemma}\label{thirteen}
When $\ell=13$, there is a constant $c_{13}>0$ such that the number of elliptic curves with non-surjective Galois representation with naive height at most $X$ is asymptotically $c_{13}X^{\frac{1}{6}}$.
\end{lemma}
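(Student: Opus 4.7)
The plan is to partition the non-surjective cases at $\ell = 13$ by the type of maximal subgroup containing $\text{im}(\rho_{E,13})$, to show that all non-Borel cases contribute only $O(X^{1/6})$, and then to extract the leading asymptotic from elliptic curves with a rational 13-isogeny.

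As recalled in Section \ref{modell}, a non-surjective image at $\ell = 13$ is contained in a Borel, a Cartan normalizer, or a subgroup projecting to $S_4$ (the $A_4$ and $A_5$ projective images being ruled out over $\mathbb{Q}$ by \cite{lmfdb}). I would first dispose of the non-Borel cases: the rational points of $X_s^+(13)$ and $X_{ns}^+(13)$ are known to be finite --- they are isomorphic over $\mathbb{Q}$ by Baran's exceptional isomorphism, and their rational points have been determined by Balakrishnan--Dogra--M\"uller--Tuitman--Vonk --- as are those of $X_{S_4}(13)$, by Banwait--Cremona. Each case therefore yields only finitely many $\overline{\mathbb{Q}}$-isomorphism classes, hence only finitely many $j$-invariants. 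For any single $j_0 \in \mathbb{Q}$, the integral pairs $(A, B)$ with $j(E_{A,B}) = j_0$ and $H_E \leq X$ form a finite union of quadratic-twist classes each of size $O(X^{1/6})$, since twisting by $d$ multiplies the naive height by $d^6$. Thus the total non-Borel contribution is $O(X^{1/6})$.

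The main term then comes from elliptic curves admitting a rational 13-isogeny. Since $X_0(13) \cong \mathbb{P}^1_{\mathbb{Q}}$, pairs $(E, C)$ with $C$ a rational cyclic subgroup of order $13$ may be parameterized by a single rational parameter $t$, and I would fix a universal Weierstrass model $(A(t), B(t)) \in \mathbb{Q}(t)^2$. Every integral short Weierstrass curve with a rational 13-isogeny then arises as a quadratic twist of some $E_t$, and the count reduces to a lattice-point problem over pairs $(t, d)$, where $d$ is the twist parameter, subject to $H_E \leq X$. Performing the inner twist sum first produces $X^{1/6}$ times a weight depending only on the minimal-twist height of $E_t$; since $j \colon X_0(13) \to X(1)$ has degree $14$, this weight decays like $H(t)^{-14/6}$, and the outer sum over $t \in X_0(13)(\mathbb{Q})$ converges absolutely (as $14/6 > 2$), yielding $\sim c_{13} X^{1/6}$ with $c_{13} > 0$.

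The principal obstacle is the Borel-case asymptotic: pinning down the minimal-twist height as an explicit function of the $X_0(13)$-parameter and verifying positivity of the leading constant $c_{13}$. Rather than carry out this analysis by hand, the cleanest route is to apply the counting framework for elliptic curves of bounded naive height with prescribed cyclic-isogeny structure, as developed by Pizzo--Pomerance--Voight and Boggess--Sankar, which applies uniformly when $X_0(N)$ has genus zero with a $\mathbb{P}^1$-parameterization over $\mathbb{Q}$ and handles precisely these bookkeeping points.
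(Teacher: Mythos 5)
Your proposal follows essentially the same route as the paper: split the non-surjective images at $\ell=13$ by type of maximal subgroup, dispose of the Cartan-normalizer and exceptional cases via the known finiteness of rational points on $X_s^+(13)$, $X_{ns}^+(13)$, and $X_{S_4}(13)$, and obtain the $c_{13}X^{1/6}$ asymptotic from the count of elliptic curves of bounded height admitting a rational $13$-isogeny. The only substantive difference is the source for that last count: the paper cites Molnar's thesis directly (note that Boggess--Sankar and Pizzo--Pomerance--Voight do not treat $N=13$; Molnar, or Molnar--Voight, is the reference that does), whereas you sketch the twist-sum heuristic before deferring to the literature.
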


\begin{proof}
The curves $X_{ns}^+(13)$ and $X_s^+(13)$ are both known to have genus 3 \cite{lmfdb}. Similarly, Theorem 1.1 of \cite{2023BDMTV} shows $\#X_{S_4}^+(13)(\mathbb{Q})=4$. Thus, it remains to consider how often we expect the image of $\rho_{E,\ell}$ to be within a Borel subgroup of $GL_2(\mathbb{F}_\ell)$.

The curve $X_0(13)$ has infinitely any rational points, however in his thesis, Molnar \cite[Theorem 1.2.4]{molnarthesis} proves that there is a positive constant $c_{13}$ such that for any $\varepsilon>0$ the number of elliptic curves of height at most $X$ with a $13$-isogeny is $c_{13}X^{1/6} + O(X^{1/8+\varepsilon})$. This tells us we can expect there are asymptotically $c_{13}X^{\frac{1}{6}}$ elliptic curves whose mod-13 representation will not be surjective, but be contained within a Borel subgroup.
\end{proof}


\subsection{Discriminants of $\ell$-torsion fields}
Here we recall some useful connections between the discriminants of elliptic curves and the discriminants of their $\ell-$torsion fields. We use their relationship to place an upper bound on the discriminant of the subfield of $\mathbb{Q}(E[\ell])$ we are interested in. We begin by defining some notation to be used for the remainder of this paper.

\begin{definition}
Let $E$ be a rational elliptic curve in short Weierstrass form. Assume the  mod-$\ell$ representation $\rho_{E,\ell}$ is surjective and set $G_\ell:=\rm{GL}_2(\mathbb{F}_\ell)$. Let
\begin{enumerate}
\item $D_{\#G_\ell}(A,B)$ denote the discriminant of the $\ell-$torsion field $\mathbb{Q}(E[\ell])$, and
\item $D_{\ell^2-1}(A,B)$ denote the discriminant of $K_{\ell^2-1}=\mathbb{Q}(P)$.
\end{enumerate}
\end{definition}

From the Nerón-Ogg-Shafarevich Criterion, we know that a prime $p \ne \ell$ divides $D_{\#G_\ell}(A,B)$ if and only if $E$ has bad reduction at that prime \cite{2009Silverman}. Further, recall that the primes of bad reduction of an elliptic curve are precisely those that divide $\Delta_E$.  Thus we can expect the primes dividing $\Delta_E$ to ramify in $\mathbb{Q}(E[\ell])$ and appear in the factorization of $D_{\#G_\ell}(A,B)$. 

We focus on elliptic curves such that the discriminant $\Delta_f=4A^3+27B^2$ of the defining polynomial is square-free. 

\begin{lemma}\label{sqrfree}
Let $E/\mathbb{Q}$ be an elliptic curve given by $y^2=x^3+Ax+B$. If $\Delta_f$ is square-free, then the following are true:
\begin{enumerate}
\item The short Weierstrass form above is minimal.
\item Let $\nu_p$ denote the $p-$adic valuation associated to a prime $p$. For all primes $p$ dividing $\Delta_f$, except possibly $p=2,3$, we have $\nu_p(j(E))=-1$. 
\end{enumerate}
\end{lemma}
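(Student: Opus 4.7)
The plan is to compute $\Delta_E$ and $j(E)$ explicitly in terms of $A$, $B$, and $\Delta_f$, and then to exploit the squarefree hypothesis through $\nu_p$-bookkeeping. The ingredients I need are the identity $\Delta_E = -16\Delta_f$, the formula $j(E) = c_4^3/\Delta_E$ with $c_4 = -48A$, and the standard minimality criterion that any integral Weierstrass equation satisfying $\nu_p(\Delta) < 12$ at every prime $p$ is a global minimal model over $\mathbb{Q}$.

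For part (1), I would first recall that any change of variables between integral Weierstrass equations has the form $x \mapsto u^2 x + r$, $y \mapsto u^3 y + s u^2 x + t$ with $u \in \mathbb{Q}^\times$ and $r,s,t \in \mathbb{Q}$, and transforms the discriminant by $\Delta \mapsto u^{-12}\Delta$. If the new equation is integral at $p$, then $\nu_p(u) \geq 0$, so under any integrality-preserving change $\nu_p(\Delta)$ can only decrease in multiples of $12$. Hence it suffices to check $\nu_p(\Delta_E) < 12$ for every prime. From $\Delta_E = -16\Delta_f$ and the squarefreeness of $\Delta_f$, this is immediate: $\nu_p(\Delta_E) \leq 1$ for $p \neq 2$ and $\nu_2(\Delta_E) \leq 5$, both well below $12$. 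Since $\mathbb{Q}$ has trivial class group, $p$-minimality at every prime assembles into a global minimal model.

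For part (2), a short computation gives $j(E) = 2^8 \cdot 3^3 \cdot A^3/\Delta_f$. For any prime $p \neq 2,3$ dividing $\Delta_f$, one has $\nu_p(j(E)) = 3\nu_p(A) - \nu_p(\Delta_f) = 3\nu_p(A) - 1$, so it remains to show $\nu_p(A)=0$. The key step is the elementary implication: if $p \mid A$ and $p \mid 4A^3 + 27B^2$ with $p \neq 3$, then $p \mid 27B^2$ and hence $p \mid B$; but then $p^2$ divides both $4A^3$ and $27B^2$, contradicting the squarefreeness of $\Delta_f$. So $\nu_p(A)=0$ and $\nu_p(j(E)) = -1$, as claimed.

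The only real subtlety lies at $p = 2,3$ in part (1), since the short Weierstrass form is often not the optimal local model at those primes and a direct appeal to Tate's algorithm would be delicate. The uniform bound $\nu_p(\Delta_E) < 12$ bypasses this entirely, so no case analysis at $2$ or $3$ is needed, and part (2) is then an elementary valuation exercise, with the squarefree hypothesis doing essentially all of the work.
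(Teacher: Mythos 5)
Your proof is correct and follows essentially the same route as the paper: minimality via the criterion $\nu_p(\Delta_E) < 12$ at every prime, and the explicit formula $j(E) = 2^8\cdot 3^3\cdot A^3/\Delta_f$ for part (2). You additionally spell out why $\nu_p(A) = 0$ for $p \mid \Delta_f$, $p \neq 2,3$ (the $p \mid A \Rightarrow p \mid B \Rightarrow p^2 \mid \Delta_f$ contradiction), a step the paper's proof leaves implicit.
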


\begin{proof}
(1) Since the discriminant of $E$, $\Delta_E=-16\cdot \Delta_f$, is not divisible by the 12th power of any prime, we have a minimal model \cite{2009Silverman}. 

(2) In short Weierstrass form, the j-invariant is defined by $j(E)=-2^63^3\frac{4A^3}{\Delta_f}$. Since $\Delta_f$ is square free, it does not share any common prime factors with $4A^3$ except possibly $p=2,3$. Therefore, if $p|\Delta_f$, we see $\nu_p(j(E))=-1$. 
\end{proof}

To determine the exact power to which a prime divides $D_{\#G_\ell}(A,B)$, we turn to consider the inertia subgroups over ramified primes. Let $\mathcal{E}_p$ denote the inertia subgroup of the Galois group over $p$. When the representation is surjective, the power to which a tamely ramified prime will divide $D_{\#G_\ell}(A,B)$ is $p^k$ where \[k=\#\text{GL}_2(\mathbb{F}_\ell) - [\text{GL}_2(\mathbb{F}_\ell):\mathcal{E}_p]. \tag{3.1}\label{power}\]

In the Galois extension $\mathbb{Q}(E[\ell])$, the size of the inertia group is equivalent to the ramification index, $e$, at any ramified prime. The theory of ramification tells us that $e$ must divide the degree of the extension. Thus, in considering only the $\ell-$torsion fields of full degree, we see $e|(\ell^2-1)(\ell^2-\ell)$. 

From Serre's work on Galois representations \cite{1971Serre}, we have that the inertia group over a ramified prime $p\ne \ell$ is either trivial or cyclic of order $\ell$. It is known from the theory of Tate curves that $\mathcal{E}_p$ will be of order $\ell$ specifically when $\ell \nmid \nu_p(j(E))$. It follows from this and Lemma \ref{sqrfree}, that if we restrict to elliptic curves with $\Delta_f$ square-free, we will know that over primes $p|\Delta_f$ except for possibly $p=2,3$, the inertia group $\mathcal{E}_p$ is guaranteed to be of order $\ell$.

The generator of $\mathcal{E}_p$ can then be used to calculate the precise power to which a tamely ramified prime divides $D_{\ell^2-1}(A,B)$. Let $M_\mathcal{E}$ denote the image of the generator of the inertia subgroup under the Galois representation $\rho_{E,\ell}$. Suppose we are considering a subfield of $\mathbb{Q}(E[\ell])$ corresponding to a degree $d$ permutation representation $\pi$ of $\text{GL}_2(\mathbb{F}_\ell)$. Then if $p$ is tamely ramified, $\nu_p(D_{d}(A,B))=k$ where
\[k=d-\#\text{orbits $\pi(M_\mathcal{E})$ induces on \{1,\ldots,d\}.} \tag{3.2}\label{tamely}\]
Hence we will find it useful to precisely determine what $M_\mathcal{E}$ is.

\begin{lemma}\label{ellgroup}
Up to conjugation, $\rm{GL}_2(\mathbb{F}_\ell)$ has one subgroup of order $\ell$. 
\end{lemma}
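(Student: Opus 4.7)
The plan is to invoke Sylow theory. First I would compute the order of $\mathrm{GL}_2(\mathbb{F}_\ell)$, which is $(\ell^2-1)(\ell^2-\ell) = \ell(\ell-1)^2(\ell+1)$. The key observation is that $\ell$ divides this order to exactly the first power, since $\gcd(\ell,\ell-1)=\gcd(\ell,\ell+1)=1$. Consequently, the Sylow $\ell$-subgroups of $\mathrm{GL}_2(\mathbb{F}_\ell)$ have order $\ell$, and conversely, any subgroup of order $\ell$ (being of prime order equal to the full $\ell$-part of $|\mathrm{GL}_2(\mathbb{F}_\ell)|$) is a Sylow $\ell$-subgroup.

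Next I would apply the second Sylow theorem, which asserts that all Sylow $\ell$-subgroups of a finite group are conjugate. Combining this with the previous observation, every subgroup of order $\ell$ in $\mathrm{GL}_2(\mathbb{F}_\ell)$ is conjugate to every other, proving the claim. To make the statement concrete and useful for the subsequent analysis of inertia groups, I would exhibit a canonical representative, namely the unipotent subgroup
\[
U = \left\{ \begin{pmatrix} 1 & a \\ 0 & 1 \end{pmatrix} : a \in \mathbb{F}_\ell \right\},
\]
which clearly has order $\ell$ and is therefore, up to conjugation, the unique subgroup of order $\ell$.

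There is no real obstacle here; the argument is essentially a one-line application of Sylow's theorems once the order of $\mathrm{GL}_2(\mathbb{F}_\ell)$ is factored. The only reason to spell out the explicit unipotent representative is that it will be invaluable later: any generator of $\mathcal{E}_p$ in the surjective case must have image conjugate to a nontrivial element of $U$, and so $M_{\mathcal{E}}$ can be taken (up to conjugation, which does not affect the orbit count appearing in \eqref{tamely}) to be a single transvection $\begin{pmatrix} 1 & 1 \\ 0 & 1 \end{pmatrix}$. This uniqueness is precisely what will let one compute the contribution of each tamely ramified prime to $D_{\ell^2-1}(A,B)$ in a uniform way in subsequent sections.
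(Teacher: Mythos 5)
Your argument is correct and is essentially identical to the paper's: both compute $|\mathrm{GL}_2(\mathbb{F}_\ell)| = \ell(\ell-1)^2(\ell+1)$, observe that $\ell$ divides this order exactly once so that any order-$\ell$ subgroup is a Sylow $\ell$-subgroup, and conclude by the conjugacy of Sylow subgroups. Your explicit unipotent representative matches the transvection the paper chooses immediately after the lemma.
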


\begin{proof}
The order of $\text{GL}_2(\mathbb{F}_\ell)$ is $\ell(\ell-1)^2(\ell+1)$. Since $\ell$ does not divide $\ell-1$ nor $\ell+1$, the largest power of $\ell$ dividing the order of the group is simply $\ell$. This tells us that any subgroup of this size is an $\ell-$Sylow subgroup of $\text{GL}_2(\mathbb{F}_\ell)$. Further, we know all $\ell-$Sylow subgroups will be conjugate. 
\end{proof}

We choose the subgroup of order $\ell$ generated by the transvection \[M_\mathcal{E} = \left(\begin{matrix} 1 & 1 \\ 0 & 1\end{matrix}\right) \tag{3.3}\] as our representative for $\mathcal{E}_p$ over any tamely ramified prime.

Now recall that if a prime $p$ is wildly ramified in a number field $K$, then $p$ divides the order of $\text{Gal}(K/\mathbb{Q})$. For $\ell \geq 2$, there are only finitely many primes that divide $\#\text{GL}_2(\mathbb{F}_\ell)$, so we can say there are only finitely many wildly ramified primes in $\mathbb{Q}(E[\ell])$, and thus in $K_{\ell^2-1}$.


\begin{lemma}\label{discriminant}
Let $E/\mathbb{Q}$ be an elliptic curve such that $\Delta_f:=4A^3+27B^2$ is square-free. For all $\ell  \geq 2$, there exists a constant $c_\ell>0$ such that
\[|D_{\ell^2-1}(A,B)| \leq c_\ell(\Delta_f)^{(\ell-1)^2}.\]
\end{lemma}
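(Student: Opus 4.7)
The plan is to factor $D_{\ell^2-1}(A,B)$ as a product of prime-power contributions and bound each factor separately, splitting the ramified primes into two types: tame primes dividing $\Delta_f$ (the generic case, handled via the orbit formula \eqref{tamely}), and a finite exceptional set $S_\ell$ of possibly wild or otherwise problematic primes whose contribution gets absorbed into the constant $c_\ell$.

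First I would identify the ramified primes of $K_{\ell^2-1}/\mathbb{Q}$. By the N\'eron--Ogg--Shafarevich criterion, any prime $p\neq\ell$ ramified in $\mathbb{Q}(E[\ell])$ must divide $\Delta_E=-16\Delta_f$, and a prime can only be wildly ramified if it divides $\#\mathrm{GL}_2(\mathbb{F}_\ell)=\ell(\ell-1)^2(\ell+1)$. Hence, after excluding a finite set $S_\ell\supseteq\{2,3,\ell\}$ depending only on $\ell$, every ramified prime outside $S_\ell$ is tamely ramified and divides $\Delta_f$. For such $p$, Lemma \ref{sqrfree}(2) together with the Tate-curve discussion above gives $\nu_p(j(E))=-1$, which forces the inertia group $\mathcal{E}_p$ to have order $\ell$; by Lemma \ref{ellgroup}, $\rho_{E,\ell}(\mathcal{E}_p)$ is therefore conjugate in $\mathrm{GL}_2(\mathbb{F}_\ell)$ to $\langle M_\mathcal{E}\rangle$. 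Since orbit counts are invariant under conjugation, I compute them directly for $M_\mathcal{E}=\begin{pmatrix}1&1\\0&1\end{pmatrix}$ acting on $\mathbb{F}_\ell^2\setminus\{0\}$ via $(x,y)\mapsto(x+y,y)$: the $\ell-1$ vectors $(x,0)$ with $x\neq 0$ are fixed, while the remaining $\ell^2-\ell$ vectors split into $\ell-1$ orbits of length $\ell$ (indexed by the nonzero value of $y$), for a total of $2(\ell-1)$ orbits. Applying \eqref{tamely} gives
\[\nu_p\bigl(D_{\ell^2-1}(A,B)\bigr) \;=\; (\ell^2-1)-2(\ell-1) \;=\; (\ell-1)^2.\]

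To finish, I would assemble the contributions. Because $\Delta_f$ is square-free, the product of tamely ramified primes outside $S_\ell$ satisfies $\prod_{p\mid\Delta_f,\,p\notin S_\ell}p\leq|\Delta_f|$, so their joint contribution to $|D_{\ell^2-1}(A,B)|$ is at most $|\Delta_f|^{(\ell-1)^2}$. For each $p\in S_\ell$, the local exponent $\nu_p(D_{\ell^2-1}(A,B))$ is bounded uniformly in $E$ by a standard estimate for discriminant exponents in a degree-$n$ extension (for example $\nu_p(\mathfrak{d})\leq n-1+n\,\nu_p(n)$ with $n=\ell^2-1$, which handles both the tame and wild parts), yielding a constant $c_\ell$ depending only on $\ell$. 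Multiplying these two bounds produces the stated inequality. The main subtlety I anticipate is verifying that the contribution from $S_\ell$ really is uniform in the curve $E$ and not merely in $\ell$: outside the clean tame regime governed by \eqref{tamely} one must fall back on a general wild-ramification estimate, and one needs to check that no $p\in S_\ell$ can contribute an arbitrarily large power as $A,B$ vary in our family.
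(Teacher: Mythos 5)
Your proposal is correct and follows essentially the same route as the paper: the exponent $(\ell-1)^2$ comes from the $2\ell-2$ orbits of $M_\mathcal{E}$ on $\mathbb{F}_\ell^2\setminus\{0\}$ via \eqref{tamely}, square-freeness of $\Delta_f$ bounds the tame contribution by $|\Delta_f|^{(\ell-1)^2}$, and the finitely many remaining primes are absorbed into $c_\ell$. Your treatment of the exceptional set is in fact slightly more careful than the paper's, since you justify the uniformity of the wild contribution with the standard bound $\nu_p(\mathfrak{d})\leq n-1+n\nu_p(n)$, which depends only on $\ell$ and not on $E$.
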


\begin{proof}
Let $\ell \geq 2$ and $E$ be a rational elliptic curve in short Weierstrass form $y^2=x^3+Ax+B$. We know that only the primes dividing $\Delta_E=-2^4(4A^3+27B^2)$ may divide $D_{\ell^2-1}(A,B)$. We also know $2$ will be wildly ramified, so without loss of generality we can narrow our focus to the primes dividing $\Delta_f=4A^3+27B^2$. Using the fact that most primes will be tamely ramified in our field, we can assume that for most pairs $(A,B)$, the primes dividing $\Delta_f$ will be tamely ramified. 

Now $M_\mathcal{E}$ induces $2\ell-2$ orbits on $\mathbb{F}_\ell^2\setminus\{0\}$. Using the formula in \eqref{tamely}, we have that if $p$ is tamely ramified, then $p^k||D_{\ell^2-1}(A,B)$ with 
\[k=(\ell^2-1)-(2\ell-2)=(\ell-1)^2.\]
Thus, we can say that, aside from the wildly ramified primes, at most $(\Delta_f)^{(\ell-1)^2}$ will divide $D_{\ell^2-1}(A,B)$.

Considering our wildly ramified primes, we know the exponents to which they will divide $D_{\ell^2-1}(A,B)$ are bounded. Thus we let $c_{\ell}$ be the product of any wildly ramified primes raised to the highest possible power to which they can divide the discriminant. This constant is then an upper bound for how the wildly ramified primes divide the discriminant. Moreover, we see that the largest possible discriminant $|D_{\ell^2-1}(A,B)|$ is $c_\ell(\Delta_f)^{(\ell-1)^2}$. 
\end{proof}

The method we used in this proof to determine an upper bound on the discriminant of $D_{\ell^2-1}(A,B)$ will be used in Section \ref{sec:generalizations} to determine similar upper bounds for other subfields of $\mathbb{Q}(E[\ell])$. 


\section{Proof of Results}\label{PROOF}

We are now ready to prove Theorems \ref{bigboy} and \ref{smallboy}. First, we show that by choosing elliptic curves with a fixed $A \ne 0$ and $\Delta_f$ square-free, we can guarantee that the fields we are counting are distinct. We then use our work in Section \ref{modell} to make a statement about how often the curves we choose will have surjective mod-$\ell$ representation. In Section \ref{sec:generalizations}, we generalize the method to other permutation representations of $\text{GL}_2(\mathbb{F}_\ell)$ and $\text{PGL}_2(\mathbb{F}_\ell)$. 

\subsection{Ensuring distinct fields} In our discussion before Lemma \ref{sqrfree}, we determined that for an elliptic curve $E:y^2=x^3+Ax+B$, the primes that ramify in the $\ell-$torsion field will be those that divide $\Delta_f$. Fixing an $A \ne 0$, our problem reduces to determining if, in general, different choices of $B$ will produce discriminants with different primes in their factorization.

\begin{lemma}\label{squarefree}
The discriminant $\Delta_f = 4A^3+27B^2$ will be square-free for a positive proportion of $B \in \mathbb{N}$. 
\end{lemma}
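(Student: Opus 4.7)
The plan is to treat $f(B) := 4A^3 + 27B^2$ as a fixed quadratic polynomial in $B$ (with $A$ a nonzero constant) and apply the standard square-free sieve. Using the Möbius identity $\mathbf{1}[n \text{ square-free}] = \sum_{d^2 \mid n} \mu(d)$, I would begin from
\[
\#\{B \leq N : f(B) \text{ square-free}\} = \sum_{d \geq 1} \mu(d)\,\#\{B \leq N : d^2 \mid f(B)\},
\]
split the outer sum at a parameter $Y = Y(N)$ such as $Y = N^{1/2}$, and let $\rho(d^2)$ denote the number of residues $B \bmod d^2$ with $d^2 \mid f(B)$. By the Chinese remainder theorem $\rho$ is multiplicative, and each inner count equals $\rho(d^2) N/d^2 + O(\rho(d^2))$.

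The range $d \leq Y$ isolates the expected main term $N \cdot \delta_A$, where
\[
\delta_A = \prod_{p} \left(1 - \frac{\rho(p^2)}{p^2}\right),
\]
and the next step is to check $\delta_A > 0$. For $p \nmid 6A$ the discriminant of $f$ in $B$ is a unit mod $p$, so Hensel's lemma forces $\rho(p^2) \leq 2 < p^2$; for primes $p \mid A$ with $p > 3$ a direct calculation gives $\rho(p^2) = p$; and $p = 2$ always gives $\rho(4) = 2$. The only genuine obstruction sits at $p = 3$, where $f(B) \equiv 4A^3 \pmod{9}$ forces $\rho(9) = 9$ whenever $3 \mid A$. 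I would therefore impose the mild congruence $3 \nmid A$ on the fixed parameter, under which every Euler factor is strictly positive and the product converges to a positive $\delta_A$.

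The main obstacle is the tail $d > Y$. Since $d^2 \mid f(B) = O(N^2)$ already forces $d \ll N$, I would bound the total contribution of $d > Y$ via a Hooley-type argument: each such $d$ contributes at most $\rho(d^2)(N/d^2 + 1)$ values of $B$, and summing over dyadic ranges of $d$ yields $o(N)$. For polynomials of degree $\geq 3$ the analogous claim is famously conditional on the abc conjecture, but for our degree-$2$ polynomial in $B$ the estimate is classical and unconditional, going back to Estermann. Combining the $N \delta_A$ main term with the $o(N)$ sieve errors yields $\#\{B \leq N : f(B) \text{ square-free}\} \sim \delta_A N$ with $\delta_A > 0$, which is the desired positive-proportion statement.
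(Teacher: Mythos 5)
Your proposal is correct in substance, but it takes a genuinely different route from the paper: the paper's proof is a one-line appeal to a classical theorem of Ricci on square-free values of quadratic polynomials, whereas you carry out the Estermann-type square-free sieve from scratch, computing the local densities $\rho(p^2)$ and handling the tail $d>Y$ directly. What your more hands-on approach buys is significant: your local analysis at $p=3$ exposes a genuine hypothesis that the paper's statement omits. Since $27B^2\equiv 0\pmod{27}$ for every $B$, one has $\Delta_f\equiv 4A^3\pmod{27}$, so if $3\mid A$ then $27\mid\Delta_f$ for \emph{all} $B$ and the polynomial has the fixed square divisor $9$; the lemma as stated is then false, and Ricci's theorem does not apply because its no-fixed-square-divisor hypothesis fails. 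Your condition $3\nmid A$ is exactly the right repair (and is harmless for the application, since $A$ is chosen freely in the proofs of the main theorems). The rest of your local computations ($\rho(p^2)\le 2$ for $p\nmid 6A$ via Hensel, $\rho(p^2)=p$ for $p\mid A$ with $p>3$, $\rho(4)=2$) are correct, and the tail estimate is indeed classical and unconditional for a quadratic polynomial, so the asymptotic $\sim\delta_A N$ with $\delta_A>0$ follows. In short: the paper's proof is shorter but, as written, incomplete in the boundary case $3\mid A$; your argument is longer but self-contained and identifies the precise local condition under which the conclusion actually holds.
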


\begin{proof}
Fix $A \ne 0$. Then the discriminant, $\Delta_f=27B^2+4A^3$, is a quadratic polynomial in the ring $\mathbb{Z}[B]$. A result of Ricci \cite[Theorem A]{1933Ricci} gives us that for a positive proportion of $B$, $\Delta_f(B)$ will be square-free. 
\end{proof}

Combined with our analysis of the valuation of $D_{\ell^2-1}(A,B)$ at tamely ramified primes as previously discussed, this lemma allows us to prove the following.

\begin{lemma}\label{FIELDS}
Fix $A \ne 0$. Let $E:y^2=f(x)=x^3+Ax+B$ and $E':y^2=g(x)=x^3+Ax+B'$ be rational elliptic curves with surjective mod-$\ell$ representation and $B \ne B'$ and let $P_E$ and $P_{E'}$ be $\ell$-torsion points on each curve, respectively. If $\Delta_f \ne \Delta_g$ are square-free, then $\mathbb{Q}(P_E)$ and $\mathbb{Q}(P_{E'})$ are distinct fields.  
\end{lemma}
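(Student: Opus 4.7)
The plan is to distinguish the two fields by exhibiting a prime that ramifies in one but not the other. Because ramification in $\mathbb{Q}(P_E)$ is controlled by the primes dividing $\Delta_f$ (via Néron--Ogg--Shafarevich) and by the transvection structure of inertia established in the discussion after Lemma \ref{sqrfree}, this is the natural invariant to exploit.

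First I would argue that since $\Delta_f\ne \Delta_g$ are both square-free, one can choose a prime $p\notin \{2,3,\ell\}$ (and outside the finite set of primes dividing $\#\mathrm{GL}_2(\mathbb{F}_\ell)$, where wild ramification might occur) that divides one of $\Delta_f,\Delta_g$ but not the other; say $p\mid \Delta_f$ and $p\nmid \Delta_g$. Generically the odd, non-$\ell$ parts of two distinct square-free integers differ, so such a $p$ exists; the only residual case is when $\Delta_f$ and $\Delta_g$ share the same prime factors outside $\{2,3,\ell\}$, which is easily seen to contribute a negligible (in fact finite, for fixed $A$) set of $(B,B')$, and in that case one can either appeal to the crude comparison of wild contributions or discard those values in the counting argument.

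Next, since $p\mid \Delta_f$ with $p\ne 2,3$, Lemma \ref{sqrfree}(2) gives $\nu_p(j(E))=-1$. Combined with the Tate curve argument summarized in the paragraph preceding \eqref{tamely}, the inertia subgroup $\mathcal{E}_p\subseteq \mathrm{Gal}(\mathbb{Q}(E[\ell])/\mathbb{Q})$ has order $\ell$, and, up to conjugation by Lemma \ref{ellgroup}, its image under $\rho_{E,\ell}$ is generated by the transvection $M_\mathcal{E}$ of (3.3). Computing the orbits of $M_\mathcal{E}$ on $\mathbb{F}_\ell^2\setminus\{0\}$ as in the proof of Lemma \ref{discriminant}, we find $\ell-1$ orbits of length $\ell$, so at least one prime above $p$ in $K_{\ell^2-1}=\mathbb{Q}(P_E)$ has ramification index $\ell$; in particular $p$ ramifies in $\mathbb{Q}(P_E)$. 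Conversely, $p\nmid \Delta_g$ (together with $p\ne 2$) implies $p\nmid \Delta_{E'}=-16\Delta_g$, so by Néron--Ogg--Shafarevich $E'$ has good reduction at $p$, and therefore $p$ is unramified in $\mathbb{Q}(E'[\ell])$ and in its subfield $\mathbb{Q}(P_{E'})$.

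Since $p$ ramifies in $\mathbb{Q}(P_E)$ but not in $\mathbb{Q}(P_{E'})$, the two fields are distinct. The only genuinely delicate step is the first one, locating a distinguishing prime outside the bad set $\{2,3,\ell\}$; this is the obstacle, and it is handled by the square-free hypothesis together with the observation that the sporadic case in which $\Delta_f$ and $\Delta_g$ agree at every odd prime different from $\ell$ (for fixed $A$) is at worst a finite exceptional set that does not affect the asymptotic count in Section \ref{PROOF}.
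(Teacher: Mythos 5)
Your argument is the same in spirit as the paper's proof, which simply asserts that two distinct square-free values of $\Delta$ have non-coinciding prime factorizations and hence yield field discriminants with non-coinciding prime factorizations; you have filled in the ramification mechanics (inertia of order $\ell$ at a prime dividing $\Delta_f$ via the Tate curve, N\'eron--Ogg--Shafarevich at a prime not dividing $\Delta_g$) that the paper leaves implicit, and that part of your write-up is correct. You have also correctly identified the point the paper glosses over: the distinguishing prime must avoid $2$, $3$, $\ell$ and the wildly ramified primes. For instance, both $\Delta_E=-16\Delta_f$ and $\Delta_{E'}=-16\Delta_g$ are divisible by $2$, so the prime $2$ can never separate the two fields by this method, and $\ell$ ramifies in both fields no matter what, so it cannot separate them either.

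However, your disposal of the exceptional case is not right as stated. Fix $A$. If $\Delta_f$ and $\Delta_g$ are square-free and share the same prime divisors outside $\{2,3,\ell\}$, then $\Delta_g=u\Delta_f$ for one of finitely many ratios $u=\pm 2^{a}3^{b}\ell^{c}$ with $a,b,c\in\{-1,0,1\}$; for a fixed such $u\neq 1$ the relation $27B'^2-27uB^2=4A^3(u-1)$ is a Pell-type conic, which can have infinitely many integer points. So the exceptional set of pairs $(B,B')$ need not be finite, and your argument (like the paper's) does not prove the lemma as literally stated in that case. What does survive, and what the application in Section \ref{PROOF} actually needs, is that each $B$ has at most $O_\ell(1)$ such bad partners $B'$ (since $u$ ranges over a finite set and the value of $\Delta$ determines $B\in\mathbb{N}$), so a positive proportion of the family still yields pairwise distinct fields. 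To close the gap fully you would either need a different invariant in the exceptional case --- for example, comparing the exact powers of $2$, $3$, and $\ell$ in the two field discriminants or comparing conductors --- or you should restate the lemma with this weaker but sufficient conclusion rather than asserting finiteness of the exceptional set.
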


\begin{proof}
Choose $B \ne B' \in \mathbb{N}$ such that the elliptic curves $E:y^2=f(x) = x^3+Ax+B$ and $E':y^2=g(x)=x^3+Ax+B'$ have surjective Galois representation and such that $\Delta_f$ and $\Delta_g$ are square-free. If $4A^3+27B^2$ and $4A^3+27(B')^2$ are both square-free and $B \ne B'$, then their prime factorizations must not coincide. Therefore, the prime factorizations of $D_{\ell^2-1}(A,B)$ and $D_{\ell^2-1}(A,B')$ will not coincide, giving us that $\Q(P_E)$ and $\Q(P_{E'})$ are distinct fields.
\end{proof}

By fixing $A$ and varying $B \in \mathbb{N}$, we are now able to guarantee that each elliptic curve we choose will produce a different $\ell$-torsion field and, hence, different subfields. 

\subsection{Surjectivity of $\rho_{E,\ell}$}\label{sec:surj}

From Lemma \ref{isomorphismclasses} we know that for $\ell \geq 17$ and for all but finitely many $\overline{\mathbb{Q}}$-isomorphism classes, the representation $\rho_{E,\ell}$ is surjective. Similarly, from Lemma \ref{thirteen} we have an asymptotic size of the number of curves with non-surjective mod-13 representation. For rational elliptic curves  with $j-$invariant not equal to 0 or 1728, $\overline{\mathbb{Q}}$-isomorphism classes correspond to quadratic twists by square-free integers of elliptic curves. Continuing our method of fixing the $A$ coefficient on our curve, we can use this to make a stronger statement regarding surjectivity for different choices of $B$. 

\begin{lemma}\label{lemma1}
Fix $A \ne 0$. For $\ell \geq 13$ and all but finitely many $B$, the representation $\rho_{E,\ell}$ is surjective. 
\end{lemma}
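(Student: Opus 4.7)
The plan is to handle $\ell\geq 17$ and $\ell=13$ separately, leveraging Lemmas~\ref{isomorphismclasses} and~\ref{thirteen}. The unifying observation is that, for fixed $A\neq 0$, each $\overline{\mathbb{Q}}$-isomorphism class of elliptic curves contains at most one curve of the form $E_{A,B}$ with $B\in\mathbb{N}$: any $\overline{\mathbb{Q}}$-isomorphism $E_{A,B}\cong E_{A,B'}$ is realized by a scaling $u\in\overline{\mathbb{Q}}^\times$ with $u^4 A = A$ and $u^6 B = B'$, which (since $A\neq 0$) forces $u\in\{\pm 1,\pm i\}$ and $B'\in\{\pm B\}$, of which at most one lies in $\mathbb{N}$.

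For $\ell\geq 17$, this observation combined with Lemma~\ref{isomorphismclasses} (only finitely many bad $\overline{\mathbb{Q}}$-iso classes) gives finitely many bad $B$ immediately. For $\ell = 13$, the classification used in the proof of Lemma~\ref{thirteen} splits the bad cases into Cartan-normalizer, $X_{S_4}$-exceptional, and Borel subgroups. The first two give only finitely many $\overline{\mathbb{Q}}$-iso classes (by Bilu--Parent--Rebolledo and the $X_{S_4}^+(13)$ computation) and are handled exactly as above; the Borel case, corresponding to a rational $13$-isogeny, is the real difficulty, since $X_0(13)(\mathbb{Q})$ is infinite.

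I would dispose of the Borel case by applying Faltings' theorem to the fiber product
\[ C_A := X_0(13)\times_{X(1)}\mathbb{A}^1_B, \]
where $\mathbb{A}^1_B\to X(1)$ is $B\mapsto j(A,B)=6912A^3/(4A^3+27B^2)$, a degree-$2$ map branched at $j=0,\,1728$. Rational points of $C_A$ are pairs $(P,B)$ with $P\in X_0(13)(\mathbb{Q})$ and $j(P)=j(A,B)$, and since rational $13$-isogenies are preserved by quadratic twisting, these project (finite-to-one) onto the set of bad $B$. A Riemann--Hurwitz tally on the degree-$2$ cover $C_A\to X_0(13)$, using the standard ramification of $X_0(13)\to X(1)$ above $j=0,\,1728$, gives $g(C_A)=3$, so Faltings forces $C_A(\mathbb{Q})$ finite and hence finitely many bad $B$.

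The hard part will be ensuring $C_A$ is geometrically irreducible for every admissible $A$; otherwise it could split into two copies of $X_0(13)$ (genus $0$, infinitely many rational points), and Faltings would give nothing useful. Irreducibility amounts to $A\cdot(1728-j)/j$ not being a square in the function field $\mathbb{Q}(X_0(13))$, a condition that is generic in $A$. Verifying it for every $A\neq 0$, or else carving out the exceptional $A$ for separate treatment, is the delicate step; it would likely require an explicit function-field descent on $X_0(13)$ or a sharper input from the literature on rational isogenies.
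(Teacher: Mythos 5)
Your strategy coincides with the paper's in all but one detail of execution. The observation that a fixed $A\neq 0$ leaves at most two curves $E_{A,\pm B}$ (one with $B\in\mathbb{N}$) in each $\overline{\Q}$-isomorphism class handles $\ell\geq 17$ and the Cartan/exceptional cases of $\ell=13$ exactly as in the paper, and for the Borel case at $\ell=13$ both arguments reduce to Faltings applied to a geometrically irreducible genus-$3$ double cover of $X_0(13)\cong\mathbb{P}^1$ parameterizing $13$-isogenous curves with prescribed $A$-coefficient. The difference is how that auxiliary curve is produced. The paper invokes Molnar's explicit parameterization $y^2=x^3+f_{13}(t)x+g_{13}(t)$ of the $13$-isogeny locus and writes the curve concretely as $A\delta^2=f_{13}(t)$, with $f_{13}$ an explicit separable polynomial of degree $8$; genus $3$ and geometric irreducibility are then immediate and manifestly independent of $A$. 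Your fiber product $C_A$ is essentially the same curve in intrinsic form, but you leave geometric irreducibility as ``the hard part,'' to be settled by a function-field descent depending on $A$. It is not hard, and it does not depend on $A$: over $\overline{\Q}$ the constant $\tfrac{4A^3}{27}$ is a square, so reducibility of $C_A$ over $\overline{\Q}$ would force $(1728-j)/j$ to be a square in $\overline{\Q}(X_0(13))$, i.e.\ to have even divisor; your own Riemann--Hurwitz tally already exhibits a nonempty odd-order branch locus --- for instance the two points of $X_0(13)$ lying unramified over $j=0$ (the elliptic points of order $3$) carry simple poles of $(1728-j)/j$. Hence $C_A$ is geometrically irreducible for every $A\neq 0$, with no exceptional $A$ to carve out; note also that splitting into two \emph{conjugate} components over a quadratic field would be harmless anyway, since such a curve has only finitely many rational points. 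With that one sentence added (or with Molnar's explicit model substituted, as the paper does), your proof is complete and equivalent to the paper's.
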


\begin{proof}
Fix $A \ne 0$ and let $E(A,B)$ denote the elliptic curve given by $y^2=x^3+Ax+B$. We start with the simpler case of $\ell \geq 17$. From Lemma \ref{isomorphismclasses}, we know that there are only finitely many $\overline{\Q}-$isomorphism classes of elliptic curves $E$ such that $\rho_{E,\ell}$ is not surjective. Since we have fixed $A\ne 0$, the only quadratic twists of the elliptic curve $E(A,B)$ with the same value of $A$ for any $B \in \mathbb{N}$ are those given by $y^2=x^3+Ax+B$ and $y^2=x^3+Ax-B$. Thus each $\overline{\Q}$-isomorphism class contains at most two elliptic curves with our fixed $A$. Pulling this together, we obtain that for a fixed $A$, there are only finitely many choices of $B$ for which the representation $\rho_{E,\ell}$ is not surjective.

Now consider the specific case of $\ell=13$. From Lemma \ref{thirteen}, we know there are $\sim c_{13}X^{\frac{1}{6}}$ elliptic curves $E(A,B)$ such that the image of $\rho_{E,13}$ is not all of $GL_2(\mathbb{F}_\ell)$. In proving his result, Molnar uses the parameterization of elliptic curves admitting a rational 13-isogeny given by $E:y^2=x^3+f_{13}(t)x+g_{13}(t)$ where $t \in \mathbb{Q}$ and $f_{13}, g_{13} \in \mathbb{Q}[t]$ \cite[Lemma 3.2.1]{molnarthesis}. From this, we can say that if there is an elliptic curve with a rational 13-isogeny with a fixed $A \ne 0$, there must be some $t \in \mathbb{Q}$ and $\delta \in \mathbb{Q}^\times$ such that
\[A\delta^2=f_{13}(t) = -3(t^2+t+7)(t^2+4)(t^4-235t^3+1211t^2-1660t+6256).\tag{4.1}\label{hypA}\]The equation in \eqref{hypA} is a hyperelliptic curve of genus 3 and, by Faltings' Theorem, it has finitely many rational points. Thus, there are only finitely many pairs $(t, \delta)$ that could produce $A$ as a coefficient on an elliptic curve with a rational 13-isogeny. These finitely many pairs then produce finitely many $B=g_{13}(t)/\delta^3$ such that $E(A,B)$ has non-surjective mod-13 representation. Thus, for a fixed $A\ne 0$, there are finitely many possible $B$ such that $\rho_{E,13}$ is not surjective.
\end{proof}

\subsection{Proof of Theorem \ref{bigboy}}

\begin{proof}
Let $\ell \geq 13$ and fix a non-zero $A \in \Z$. From Lemma \ref{lemma1}, we know for all but finitely many choices of $B$, the representation associated to $E(A,B)$, $\rho_{E,\ell}$, will be surjective. Those with surjective representation will produce number fields with Galois group $\rm{GL}_2(\F_\ell)$.  From Lemma \ref{squarefree} we have that for a positive proportion of $B \in \mathbb{N}$, $\Delta_f = 4A^3+27B^2$ will be square-free and from Lemma \ref{discriminant} we know $|D_{\ell^2-1}(A,B)| \ll (\Delta_f)^{(\ell-1)^2}$ when $\Delta_f$ is square-free. For different choices of $B$, and hence for different elliptic curves meeting the aforementioned criteria, Lemma \ref{FIELDS} tells us we have different fields.

Now let us vary $B \in \mathbb{N}$ such that $B \ll X^\frac{1}{2(\ell-1)^2}$ and $\Delta_f$ is square-free so that $D_{\ell^2-1}(A,B) \leq X$. This gives us $\gg X^\frac{1}{2(\ell-1)^2}$ possible elliptic curves whose $\ell-$torsion field has Galois group $\text{GL}_2(\F_\ell)$. Thus we obtain
\[\#\mathcal{F}_{\ell^2-1}(X;\text{GL}_2(\mathbb{F}_\ell)) \gg X^\frac{1}{2(\ell-1)^2},\]
as claimed.
\end{proof}

\subsection{Proof of Theorem \ref{smallboy}}

Here we prove our lower bound for $\text{PGL}_2(\mathbb{F}_\ell)$. Much of what is needed follows directly from our proof of Theorem \ref{bigboy}; surjectivity of the Galois representation and the distinctness of the fields will come for free. It remains to consider the upper bound on the discriminant of the fields of degree $\ell+1$. Let $D_{\ell+1}(A,B)$ denote the discriminant of these fields.

\begin{lemma}\label{pgldisc}
For all $\ell \geq 2$, there exists a constant $c_\ell>0$ such that
\[|D_{\ell+1}(A,B)| \leq c_\ell(\Delta_f)^{\ell-1}.\]
\end{lemma}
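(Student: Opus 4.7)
The plan is to mimic the proof of Lemma \ref{discriminant} exactly, replacing the degree $\ell^2-1$ permutation representation on $\mathbb{F}_\ell^2\setminus\{0\}$ with the degree $\ell+1$ permutation representation of $\mathrm{PGL}_2(\mathbb{F}_\ell)$ on $\mathbb{P}^1(\mathbb{F}_\ell)$. The whole calculation reduces to counting the number of orbits that the image of the inertia generator $M_\mathcal{E}$ produces on $\mathbb{P}^1(\mathbb{F}_\ell)$, then applying formula \eqref{tamely} prime-by-prime and absorbing the (finitely many) wildly ramified primes into the constant $c_\ell$.

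First, I would restrict to elliptic curves $E: y^2 = x^3 + Ax + B$ with $\Delta_f$ square-free, so that Lemma \ref{sqrfree} applies. For every prime $p \mid \Delta_f$ other than possibly $p=2,3$, the inertia subgroup $\mathcal{E}_p$ has order $\ell$, and by Lemma \ref{ellgroup} we may take its generator in $\mathrm{GL}_2(\mathbb{F}_\ell)$ to be the transvection $M_\mathcal{E} = \left(\begin{smallmatrix} 1 & 1 \\ 0 & 1 \end{smallmatrix}\right)$ displayed in \eqref{power}. The image of $M_\mathcal{E}$ in $\mathrm{PGL}_2(\mathbb{F}_\ell)$ acts on $\mathbb{P}^1(\mathbb{F}_\ell)$ via the Möbius transformation $z \mapsto z+1$ (fixing the point at infinity and cyclically permuting the $\ell$ affine points). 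Hence its orbits on $\mathbb{P}^1(\mathbb{F}_\ell)$ consist of one fixed point $\{\infty\}$ together with one orbit of length $\ell$ on $\mathbb{A}^1(\mathbb{F}_\ell)$, giving exactly $2$ orbits in total.

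Next, I plug this into \eqref{tamely} with $d=\ell+1$: for every tamely ramified prime $p \mid \Delta_f$, the exact power of $p$ dividing $D_{\ell+1}(A,B)$ is
\begin{equation*}
k \;=\; (\ell+1) - 2 \;=\; \ell-1.
\end{equation*}
Therefore the contribution to $|D_{\ell+1}(A,B)|$ from all tamely ramified primes is bounded by $(\Delta_f)^{\ell-1}$.

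Finally, the only remaining issue is the wildly ramified primes. As observed in Section \ref{modell}, a prime is wildly ramified in $\mathbb{Q}(E[\ell])$ only if it divides $\#\mathrm{GL}_2(\mathbb{F}_\ell) = \ell(\ell-1)^2(\ell+1)$, so there are only finitely many such primes and for each the exponent of $p$ in $D_{\ell+1}(A,B)$ is bounded in terms of $\ell$ alone (by standard bounds on differents of tame-plus-wild extensions). I define $c_\ell$ to be the product of these finitely many primes raised to the maximal possible exponent occurring in a degree $\ell+1$ subfield of $\mathbb{Q}(E[\ell])$; this depends only on $\ell$, and it majorizes the wild contribution. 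Combining, we obtain $|D_{\ell+1}(A,B)| \leq c_\ell (\Delta_f)^{\ell-1}$, as desired. No step is a genuine obstacle; the only care needed is verifying the orbit count for $z \mapsto z+1$ on $\mathbb{P}^1(\mathbb{F}_\ell)$, which is immediate.
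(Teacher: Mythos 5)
Your proposal is correct and follows essentially the same route as the paper: compute that the image of $M_\mathcal{E}$ in $\mathrm{PGL}_2(\mathbb{F}_\ell)$ has exactly $2$ orbits on $\mathbb{P}^1(\mathbb{F}_\ell)$ (one fixed point and one $\ell$-cycle), apply \eqref{tamely} to get exponent $\ell-1$ at each tamely ramified prime, and absorb the finitely many wildly ramified primes into $c_\ell$ exactly as in Lemma \ref{discriminant}. The only cosmetic slip is that the transvection $M_\mathcal{E}$ is displayed in (3.3), not in \eqref{power}.
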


The proof of this lemma follows similarly to that of Lemma \ref{discriminant} in our analysis of what primes will be tamely ramified versus wildly ramified. The missing piece is simply the action of $M_\mathcal{E}$ on $\mathbb{P}^1(\mathbb{F}_\ell)$. We note that the image of $\langle M_\mathcal{E} \rangle$ in $\text{PGL}_2(\mathbb{F}_\ell)$ remains a subgroup of order $\ell$. 

\begin{proof}
The generator $M_\mathcal{E}$ induces an $\ell-$cycle on the subset of $\mathbb{P}^1(\mathbb{F}_\ell)$ generated by $[1:1]$ and fixes the single element $[1:0]$, hence induces 2 orbits. Using the formula in \ref{power}, we have that for a tamely ramified prime, $p$, $p^k||D_{\ell+1}(A,B)$ with $k=\ell-1.$ Treating the case of wildly ramified primes as in the proof of Lemma \ref{discriminant}, we obtain our result.
\end{proof}

\noindent With this, we are ready to prove Theorem \ref{smallboy}. 

\begin{proof}
Let $\ell \geq 13$ and fix $A \ne 0$. Lemma \ref{lemma1} tells us for all but finitely many choices of $B$ $\rho_{E,\ell}$ will be surjective. From Lemma \ref{pgldisc} we know $|D_{\ell+1}(A,B)| \ll (\Delta_f)^{(\ell-1)}$ and from Lemma \ref{squarefree} we have that a positive portion of the time, $\Delta_f = 4A^3+27B^2$ will be square-free. 

As we did for $\text{GL}_2(\mathbb{F}_\ell)$, let us vary $B$ such that $|B| \ll X^\frac{1}{2(\ell-1)}$, $\Delta_f$ is square-free. This gives us $\gg X^\frac{1}{2(\ell-1)}$ possible elliptic curves whose $\ell-$torsion field has Galois group $\text{GL}_2(\F_\ell)$. In turn, our subfield of degree $\ell+1$ will have Galois group $\text{PGL}_2(\mathbb{F}_\ell)$. Thus we obtain
\[\#\mathcal{F}_{\ell+1}(X;\text{PGL}_2(\mathbb{F}_\ell)) \gg X^\frac{1}{2(\ell-1)},\]
as claimed.
\end{proof}

\subsection{Generalization to other representations of $\rm{GL}_2(\mathbb{F}_\ell)$ and $\rm{PGL}_2(\mathbb{F}_\ell)$} \label{sec:generalizations}

The procedure developed in order to prove Theorems \ref{bigboy} and \ref{smallboy} can be generalized to any other permutation representation of $\text{GL}_2(\mathbb{F}_\ell)$ or $\text{PGL}_2(\mathbb{F}_\ell)$. As before, by specifying to elliptic curves with surjective mod-$\ell$ representation and square-free discriminant, we guarantee the existence and distinctness of a field corresponding to any degree permutation representation of $\text{GL}_2(\mathbb{F}_\ell)$ or $\text{PGL}_2(\mathbb{F}_\ell)$.

\begin{lemma}\label{gendisc}
Let $\pi$ be a faithful and transitive permutation representation of $\rm{GL}_2(\mathbb{F}_\ell)$ of degree d and let $D_d(A,B)$ denote the discriminant of the subfield of $\mathbb{Q}(E[\ell])$ corresponding to the representation. For all $\ell \geq 2$, there exists a constant $c_\ell>0$ such that
\[|D_{d}(A,B)| \leq c_\ell(\Delta_f)^{\text{\rm ind}(\pi(M_\mathcal{E}))},\]
where $\text{\rm ind}(\pi(M_\mathcal{E}))$ is the Malle index of $\pi(M_\mathcal{E})$. 
\end{lemma}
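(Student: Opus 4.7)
The plan is to mirror the proofs of Lemma \ref{discriminant} and Lemma \ref{pgldisc}, which treat the special cases $d = \ell^2-1$ and $d = \ell+1$. The crux of both is the formula \eqref{tamely} for the exponent of a tamely ramified prime in the discriminant, and in each case the exponent was computed explicitly ($(\ell-1)^2$ and $\ell-1$, respectively); both of these are just $\text{ind}(\pi(M_\mathcal{E}))$ for the respective representation, so the general statement requires no new ideas.

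First I would recall that only primes of bad reduction of $E$ can divide $D_d(A,B)$, so up to possibly $p = 2, 3$ the relevant primes are those dividing $\Delta_f$. I would then split these into tame and wild: a wildly ramified prime must divide $\#\text{GL}_2(\mathbb{F}_\ell)$, so there are only finitely many of them (depending only on $\ell$), and each contributes a bounded exponent to $D_d(A,B)$ (the bound depending on $\ell$ and $d$). As in Lemma \ref{discriminant}, I absorb their total contribution into a constant $c_\ell$.

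For the tamely ramified primes $p \mid \Delta_f$, the square-free hypothesis on $\Delta_f$ combined with Lemma \ref{sqrfree}(2) gives $\ell \nmid \nu_p(j(E))$, so by the Tate-curve discussion preceding Lemma \ref{ellgroup} the inertia group $\mathcal{E}_p \subseteq \text{Gal}(\mathbb{Q}(E[\ell])/\mathbb{Q})$ is cyclic of order $\ell$, and by Lemma \ref{ellgroup} its image under $\rho_{E,\ell}$ is conjugate to $\langle M_\mathcal{E} \rangle$. Applying \eqref{tamely} to the degree-$d$ representation $\pi$ then yields
\[\nu_p(D_d(A,B)) = d - \#\{\text{orbits of }\pi(M_\mathcal{E})\text{ on }\{1,\ldots,d\}\} = \text{ind}(\pi(M_\mathcal{E})),\]
which is immediate from the definition \eqref{ind}. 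Taking the product over all such tame $p$ bounds the tame part of $|D_d(A,B)|$ by $(\Delta_f)^{\text{ind}(\pi(M_\mathcal{E}))}$, and multiplying by the wild constant $c_\ell$ gives the claim.

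There is no serious obstacle: the two previously proven instances already contain every ingredient, and the generalization merely replaces an explicit orbit count by its symbolic form. The only small bookkeeping point is to verify that $\pi(M_\mathcal{E})$ is nontrivial, so that the index is a genuine positive exponent at each tame prime; this follows from the faithfulness of $\pi$ together with $M_\mathcal{E} \ne \text{Id}$. (Even without faithfulness the statement would remain true, since an $M_\mathcal{E}$ that fell in the kernel of $\pi$ would correspond to an unramified prime in the subfield, still consistent with $\text{ind}(\pi(M_\mathcal{E})) = 0$.)
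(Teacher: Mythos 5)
Your proof is correct and follows exactly the route the paper intends: the paper omits the argument, stating only that it ``follows similar to the proofs of Lemma \ref{discriminant} and \ref{pgldisc},'' and your write-up is precisely that generalization — tame/wild splitting, the orbit-count formula \eqref{tamely} applied to $\pi(M_\mathcal{E})$, and absorption of the wildly ramified contribution into $c_\ell$. The only caveat worth noting is that, as in Lemma \ref{discriminant}, the square-freeness of $\Delta_f$ must be (implicitly) assumed for the tame inertia groups to have order $\ell$, which you correctly invoke.
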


The proof of this theorem follows similar to the proofs of Lemma \ref{discriminant} and \ref{pgldisc}. A similar statement can be made for the case of $\rm{PGL}_2(\mathbb{F}_\ell)$. With this, we can prove the following generalizations of Theorems \ref{bigboy} and \ref{smallboy}. 

\begin{theorem}\label{almostcorollary}
Let $\pi$ be a faithful and transitive permutation representation of $\rm{GL}_2(\mathbb{F}_\ell)$ of degree d. Then for $\ell \geq 13$,
\[\#\mathcal{F}_d(X;\pi(\rm{GL}_2(\mathbb{F}_\ell))) \gg X^{\frac{1}{2(\text{\rm ind}(\pi(M_{\mathcal{E}})))}}.\] 
Similarly, if $\pi$ is a faithful and transitive permutation representation of $\rm{PGL}_2(\mathbb{F}_\ell)$ of degree d, then for $\ell \geq 13$,
\[\#\mathcal{F}_d(X;\pi(\rm{PGL}_2(\mathbb{F}_\ell))) \gg X^{\frac{1}{2(\text{\rm ind}(\pi(M_{\mathcal{E}})))}}.\] 
\end{theorem}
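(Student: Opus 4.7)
The plan is to follow the proofs of Theorems \ref{bigboy} and \ref{smallboy} essentially verbatim, with Lemma \ref{gendisc} playing the role of the representation-specific discriminant bounds in Lemmas \ref{discriminant} and \ref{pgldisc}. Every genuinely difficult input --- the surjectivity of $\rho_{E,\ell}$ for all but finitely many $B$ (Lemma \ref{lemma1}), the positive density of square-free values of $4A^3+27B^2$ (Lemma \ref{squarefree}), and the explicit control of tame inertia by the transvection $M_\mathcal{E}$ --- has already been established in a form that makes no reference to the particular permutation representation $\pi$, so the remaining work is essentially bookkeeping.

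Concretely, I would fix a nonzero integer $A$ and let $B$ range over positive integers. Discarding the finitely many $B$ for which $\rho_{E,\ell}$ fails to be surjective, and restricting to the positive proportion of $B$ for which $\Delta_f$ is square-free, each remaining curve $E(A,B)$ yields, via Galois correspondence, a subfield $K_d \subseteq \mathbb{Q}(E[\ell])$ of degree $d$ whose Galois group is permutation isomorphic to $\pi(\mathrm{GL}_2(\mathbb{F}_\ell))$. Applying Lemma \ref{gendisc} and using $\Delta_f \asymp B^2$ for large $B$ (with $A$ fixed), we obtain $|D_d(A,B)| \ll B^{2\,\mathrm{ind}(\pi(M_\mathcal{E}))}$, so varying $B$ over the range $B \ll X^{1/(2\,\mathrm{ind}(\pi(M_\mathcal{E})))}$ ensures $|D_d(A,B)| \leq X$ and produces $\gg X^{1/(2\,\mathrm{ind}(\pi(M_\mathcal{E})))}$ candidate fields. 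The $\mathrm{PGL}_2(\mathbb{F}_\ell)$ statement is handled identically, since the image of $M_\mathcal{E}$ under the projection $\mathrm{GL}_2(\mathbb{F}_\ell) \to \mathrm{PGL}_2(\mathbb{F}_\ell)$ is still a nontrivial element of order $\ell$ (because $M_\mathcal{E}$ is not a scalar).

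The step that requires the most care, and that I expect to be the main obstacle in any rigorous write-up, is the analog of Lemma \ref{FIELDS}: verifying that different admissible $B$ actually produce distinct number fields $K_d$. The argument in Lemma \ref{FIELDS} hinged on two facts: that distinct square-free values of $\Delta_f$ have distinct prime factorizations, and that every such prime (away from the finitely many wildly ramified ones) actually ramifies in the degree $\ell^2-1$ subfield. The second point is what needs to be checked for general $\pi$. By Néron--Ogg--Shafarevich, the ramified primes of $\mathbb{Q}(E[\ell])$ outside $\{2,\ell\}$ are exactly the odd primes dividing $\Delta_f$, and at such a tame prime the inertia is generated by (a conjugate of) $M_\mathcal{E}$. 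Because $\pi$ is faithful and $M_\mathcal{E}$ is nontrivial, $\pi(M_\mathcal{E})$ is a nontrivial permutation of order $\ell$, so it possesses at least one orbit of length $\geq 2$, and hence $\mathrm{ind}(\pi(M_\mathcal{E})) \geq 1$ and $p \mid D_d(A,B)$. Consequently, the set of odd tamely ramified primes of $K_d$ coincides with the set of odd prime divisors of $\Delta_f$, and distinct square-free $\Delta_f$'s yield $K_d$'s with distinct sets of ramified primes, hence distinct number fields.

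With distinctness in hand, the counting is immediate and the theorem follows. I would write this up as a single proof that proves both the $\mathrm{GL}_2$ and $\mathrm{PGL}_2$ versions in parallel, pointing out that the only place where the group enters is through the value of $\mathrm{ind}(\pi(M_\mathcal{E}))$ in the exponent, and that the specializations recover Theorems \ref{bigboy} and \ref{smallboy} by taking $\pi$ to be the natural degree $\ell^2-1$ (resp.\ degree $\ell+1$) representation, for which $\mathrm{ind}(\pi(M_\mathcal{E})) = (\ell-1)^2$ (resp.\ $\ell-1$) as computed in the proofs of Lemmas \ref{discriminant} and \ref{pgldisc}.
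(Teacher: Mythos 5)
Your proposal follows the paper's own proof of Theorem \ref{almostcorollary} essentially verbatim: fix $A\ne 0$, restrict to the positive proportion of $B$ with $\Delta_f$ square-free and surjective $\rho_{E,\ell}$, apply Lemma \ref{gendisc}, and count $B \ll X^{1/(2\,\mathrm{ind}(\pi(M_\mathcal{E})))}$. Your extra paragraph verifying distinctness of the fields $K_d$ for general $\pi$ (via faithfulness of $\pi$ forcing $\mathrm{ind}(\pi(M_\mathcal{E}))\ge 1$, so the tame primes genuinely ramify) is a point the paper only asserts in passing, and is a worthwhile addition rather than a deviation.
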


\begin{proof}
Let $E/\Q$ be a rational elliptic curve in short Weierstrass form with surjective representation and let $\ell \geq 13$. 

From Lemma \ref{gendisc} we know $|D_{d}(A,B)| \ll (\Delta_f)^{\text{ind}(\pi(M_\mathcal{E}))}$ and from Lemma \ref{squarefree} we have that a positive proportion of the time, $\Delta_f = 4A^3+27B^2$ will be square-free. 

Now let us vary $B$ such that $|B| \ll X^\frac{1}{2(\text{ind}(\pi(M_\mathcal{E})))}$ and $\Delta_f$ is square-free. This gives us $\gg X^\frac{1}{2(\text{ind}(\pi(M_\mathcal{E})))}$ possible elliptic curves whose $\ell-$torsion field has Galois group $\text{GL}_2(\F_\ell)$. Thus we obtain
\[\#\mathcal{F}_d(X;\pi(\text{GL}_2(\mathbb{F}_\ell))) \gg X^\frac{1}{2(\text{ind}(\pi(M_\mathcal{E})))}.\]
The proof for the degree $d$ permutation representation of $\text{PGL}_2(\mathbb{F}_\ell)$ follows similarly.
\end{proof}

\begin{example} \rm
We can use Theorem \ref{almostcorollary} to give a lower bound on the number of $\rm{GL}_2(\mathbb{F}_\ell)$ of degree $(\ell^2-1)(\ell^2-\ell)$.

Since we are considering $\text{GL}_2(\mathbb{F}_\ell)$ in its regular permutation representation, we simply have \[\text{ind}(M_\mathcal{E})=\#\text{GL}_2(\mathbb{F}_\ell)-[\text{GL}_2(\mathbb{F}_\ell):\langle M_\mathcal{E}\rangle]=(\ell+1)(\ell-1)^3.\tag{4.1}\label{regrep}\]
Thus we can say,
\[\#\mathcal{F}_{\#\text{GL}_2(\mathbb{F}_\ell)}(X;\text{GL}_2(\mathbb{F}_\ell)) \gg X^\frac{1}{2(\ell+1)(\ell-1)^3}.\]
As mentioned earlier, this result improves the previously best known lower bound of \cite{2023Ray} for the regular permutation representation of $\text{GL}_2(\mathbb{F}_\ell)$,
\[\#\mathcal{F}_{\#\text{GL}_2(\mathbb{F}_\ell)}(X;\text{GL}_2(\mathbb{F}_\ell)) \gg \frac{X^{\frac{1}{12(\ell+1)(\ell-1)^3}}}{\log{X}}.\]
In particular, the exponent in \ref{regrep} is about one-fourth that of what is predicted, whereas the one appearing in Ray's lower bound is about one-twelfth.
\end{example}

{\singlespacing
\addcontentsline{toc}{chapter}{References}
\bibliography{refs2/thesisbib}
\bibliographystyle{alpha}  
}

\end{document}